\newcommand{\CF}{{\mathcal {F}}}
\newcommand{\CN}{{\mathcal {N}}}
\newcommand{\CO}{{\mathcal {O}}}
\newcommand{\CU}{{\mathcal {U}}}
\newcommand{\Ad}{{\mathrm{Ad}}}
\newcommand{\End}{{\mathrm{End}}}
\newcommand{\GL}{{\mathrm{GL}}}
\newcommand{\Hom}{{\mathrm{Hom}}}
\newcommand{\rank}{{\mathrm{rank}}}
\newcommand{\rk}{{\mathrm{k}}}
\newcommand{\SO}{{\mathrm{SO}}}
\newcommand{\Sp}{{\mathrm{Sp}}}
\newcommand{\tr}{{\mathrm{tr}}}
\newcommand{\vsp}{{\vspace{0.2in}}}
\newcommand{\con}{\textit{C}}
\newcommand{\sdim}{\operatorname{sdim}}
\newcommand{\oH}{\operatorname{H}}
\newcommand{\oJ}{\operatorname{J}}
\newcommand{\oO}{\operatorname{O}}
\newcommand{\oT}{\operatorname{T}}
\newcommand{\oU}{\operatorname{U}}
\newcommand{\oZ}{\operatorname{Z}}
\newcommand{\oD}{\textit{D}}
\newcommand{\h}{\mathfrak h}
\renewcommand{\a}{\mathfrak a}
\renewcommand{\u}{\mathfrak u}
\renewcommand{\j}{\mathfrak j}
\newcommand{\z}{\mathfrak z}
\newcommand{\su}{\mathfrak s \mathfrak u}
\renewcommand{\sl}{\mathfrak s \mathfrak l}
\newcommand{\Z}{\mathbb{Z}}
\newcommand{\C}{\mathbb{C}}
\newcommand{\R}{\mathbb R}
\newcommand{\abs}[1]{\lvert#1\rvert}
\newcommand{\la}{\langle}
\newcommand{\ra}{\rangle}
\newcommand{\be}{\begin {equation}}
\newcommand{\ee}{\end {equation}}
\newcommand{\bee}{\begin {equation*}}
\newcommand{\eee}{\end {equation*}}
\theoremstyle{Theorem}
\newtheorem{introtheorem}{Theorem}
\newtheorem{thm}{Theorem}[section]
\newtheorem{lemt}[thm]{Lemma}
\theoremstyle{Theorem}
\theoremstyle{Theorem}
\newtheorem{prp}{Proposition}[section]
\newtheorem{corp}[prp]{Corollary}
\newtheorem{lemp}[prp]{Lemma}
\theoremstyle{Plain}
\theoremstyle{Plain}
\newtheorem*{thms}{Theorem D$'$}
\newtheorem*{thmss}{Theorem D$''$}
\theoremstyle{Definition}
\newtheorem{dfnt}[thm]{Definition}
\begin{document}

\title[Multiplicity one theorems]{Multiplicity one theorems for Fourier-Jacobi models}

\author{Binyong Sun}
\address{Academy of Mathematics and Systems Science\\
Chinese Academy of Sciences\\
Beijing, 100190, China} \email{sun@math.ac.cn}



\begin{abstract}
For every genuine irreducible admissible smooth representation $\pi$
of the metaplectic group $\widetilde{\Sp}(2n)$ over a p-adic field,
and every smooth oscillator representation $\omega_\psi$ of
$\widetilde{\Sp}(2n)$, we prove that the tensor product $\pi\otimes
\omega_\psi$ is multiplicity free as a smooth representation of the
symplectic group $\Sp(2n)$. Similar results are proved for general
linear groups and unitary groups.
\end{abstract}

\thanks{Supported by NSFC Grants 10801126 and 10931006.}
\maketitle


\section{Introduction}
\label{sec:intro}

Fix a non-archimedean local field $\rk$ of characteristic zero. The
following multiplicity one theorem for general linear groups,
unitary groups and orthogonal groups, which has been expected by J. Bernstein and S. Rallis since
1980's, is established recently by
Aizenbud-Gourevitch-Rallis-Schiffmann in \cite{AGRS}.
\begin{introtheorem}
Let $G$ denote the group $\GL(n)$, $\oU(n)$, or $\oO(n)$, defined
over $\rk$, and let $G'$ denote $\GL(n-1)$, $\oU(n-1)$, or
$\oO(n-1)$, respectively, regarded as a subgroup of $G$ as usual.
Then for any irreducible admissible smooth representation $\pi$ of
$G$, and $\pi'$ of $G'$, one has that
\[
   \dim \Hom_{G'}(\pi\otimes\pi',\C)\leq 1.
\]
\end{introtheorem}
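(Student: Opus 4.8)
The plan is to deduce the $\Hom$-bound from a statement about invariant distributions, in the style of Gelfand and Kazhdan. Set $\widehat G=G\times G'$, let $\Delta G'\subset\widehat G$ be the diagonal copy of $G'$ acting by conjugation, and let $\theta$ be the anti-involution of $\widehat G$ acting on each factor by transpose (for $\GL$) or by an analogous anti-automorphism preserving the smaller factor (for $\oU$ and $\oO$); the point of the choice is that $\theta$ sends every irreducible representation of each factor to its contragredient. Under this hypothesis a Gelfand--Kazhdan argument reduces the inequality $\dim\Hom_{G'}(\pi\otimes\pi',\C)\le 1$ to the following assertion, since bi-$\Delta G'$-invariant distributions on $\widehat G$ are the same as $\Ad(G')$-invariant distributions on $G$: every $\Ad(G')$-invariant distribution on $G$ is invariant under $\theta|_G$; equivalently, there is no nonzero $\Ad(G')$-invariant distribution on $G$ that is anti-invariant under $\theta|_G$. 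From here everything is about invariant distributions.

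Next I would linearize. Replacing $G$ near a semisimple conjugacy class by its Lie algebra via a Cayley-type map, and using Bernstein's localization principle together with Harish--Chandra descent to assemble the local statements, one reduces to the analogous claim on a vector space with a linear $G'$-action: for $\GL$ this is $X=\gl(V)\oplus V\oplus V^{*}$, with $\GL(V)=G'$ acting by $\ad$ on $\gl(V)$ and by the standard and dual actions on $V$ and $V^{*}$; for $\oU$ and $\oO$ it is the corresponding space built from the smaller group and the ``extra'' line. The target is: every $G'$-invariant distribution on $X$ is invariant under the involution $\tau$ that transposes the $\gl(V)$-component (via the trace form) and interchanges $V$ with $V^{*}$. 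The first technical step is to locate supports: a standard separation-of-orbits argument forces a $G'$-invariant, $\tau$-anti-invariant distribution to be supported on $Z=\{x\in X:\tau(x)\in\overline{G'\!\cdot x}\}$, and an explicit linear-algebra computation describes $Z$ as a finite union of $G'$-stable subvarieties. Harish--Chandra descent — using that the centralizer in $G'$ of a semisimple element is again a product of groups of the same three types but of strictly smaller rank — then permits an induction on $\dim V$ which reduces us to distributions supported on the nilpotent locus of $X$.

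On the nilpotent locus I would use the Fourier transform with respect to the trace form and the pairing between $V$ and $V^{*}$; it preserves $G'$-invariance and intertwines $\tau$ with itself up to sign. Given a nilpotent orbit $\mathcal{O}$, the Jacobson--Morozov $\sl_2$-triple provides a one-parameter $\mathbb{G}_m$-action under which a distribution supported on $\overline{\mathcal{O}}$ is quasi-homogeneous of a computable degree; comparing this degree with the one forced on its Fourier transform (the Jacquet--Rallis homogeneity argument, equivalently a wave-front-set estimate) shows that, except for a short explicit list of orbits $\mathcal{O}$, no nonzero $G'$-invariant, $\tau$-anti-invariant distribution can be supported on $\overline{\mathcal{O}}$.

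The exceptional orbits are the crux, and I expect handling them to be the main obstacle. For these the homogeneity bookkeeping is inconclusive, so a genuinely separate argument is needed: after a further descent each such orbit should be reduced to a small model — essentially $\GL_2$ or a single Jordan block in the $\GL$ case, and a rank-one situation in the $\oU$ and $\oO$ cases — in which the space of candidate distributions can be pinned down directly, typically through a Frobenius-reciprocity description of distributions on the relevant homogeneous space together with an explicit computation. Showing that every bad orbit funnels down to such a tractable base case, and carrying out those base cases uniformly across general linear, unitary, and orthogonal groups, is where essentially all the work lies; the Gelfand--Kazhdan reduction, the passage to the Lie-algebra model, the Harish--Chandra descent, and the Fourier-analytic homogeneity argument are lengthy but conceptually routine.
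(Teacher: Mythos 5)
The paper does not actually prove this statement: Theorem A is quoted from Aizenbud--Gourevitch--Rallis--Schiffmann \cite{AGRS}, and the body of the present paper is devoted to the skew-Hermitian analogues (Theorems B--D), whose proof runs parallel to, and partly relies on, \cite{AGRS}. Your outline is essentially a summary of the \cite{AGRS} strategy: the Gelfand--Kazhdan reduction to the vanishing of conjugation-invariant, $\theta$-anti-invariant distributions, the linearization to $\gl(V)\oplus V\oplus V^{*}$ (and its Hermitian analogues), Harish--Chandra descent to reduce to nilpotently supported distributions, and the Fourier-transform/homogeneity argument orbit by orbit. At the level of architecture this is the correct and standard route.

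The one place where your sketch diverges from the actual argument --- and where it is too vague to count as a proof --- is the treatment of the orbits not eliminated by the Fourier/homogeneity step. These are precisely the distinguished nilpotent orbits, and in \cite{AGRS} (as in Section 4 of this paper, which adapts the same mechanism to the symplectic case) they are \emph{not} handled by descending to a small model such as $\GL_2$ or a rank-one group. Instead, one first uses the equivariant deformation $\eta_t(x,u)=(x+t\phi_u,u)$ to show that over the open orbit $\CO_i$ any invariant distribution supported on $\CN_i\times E$ must be supported in the fibers $E(o)=\{v\mid \phi_v\in[\u(E),o]\}$; applying the same statement to the partial Fourier transform in the $E$-variable and invoking Frobenius reciprocity, the fiberwise distribution at a base point $\mathbf e$ is forced to be a Haar measure on the subspace $E^{+}$ determined by a Jacobson--Morozov triple, and this candidate is killed by exhibiting an explicit element of the stabilizer of $\mathbf e$ in the extended group that preserves that Haar measure while acting by $-1$ through the character $\chi_{E,\mathbf e}$. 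Without this step (or an equivalent substitute) the induction over nilpotent orbits does not close, since homogeneity alone cannot rule out the distinguished orbits.
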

As will be clear later, the groups $\GL(n)$, $\oU(n)$ and $\oO(n)$
are automorphism groups of ``Hermitian modules". Therefore, we
consider Theorem A the multiplicity one theorem in the ``Hermitian
case". It is the first step towards the famous Gross-Prasad
Conjecture (\cite{GP92,GP94,GR06,GGP}).

In \cite{GGP}, W. T. Gan, B. Gross and D. Prasad formulate an analog
of the Gross-Prasad Conjecture in the ``skew-Hermitian case". The
corresponding multiplicity one theorem, whose proof is the main goal
of this paper, is the following:

\begin{introtheorem}\label{intrb}
Let $G$ denote the group $\GL(n)$, $\oU(n)$, or $\Sp(2n)$, defined
over $\rk$, and regarded as a subgroup of the symplectic group
$\Sp(2n)$ as usual. Let $\widetilde G$ be the double cover of $G$
induced by the metaplectic cover $\widetilde{\Sp}(2n)$ of $\Sp(2n)$.
Denote by $\omega_\psi$ the smooth oscillator representation of
$\widetilde{\Sp}(2n)$ corresponding to a non-trivial character
$\psi$ of $\rk$.  Then for any irreducible admissible smooth
representation $\pi$ of $G$, and any genuine irreducible admissible
smooth representation $\pi'$ of $\widetilde G$, one has that
\[
   \dim \Hom_{G}(\pi'\otimes \omega_\psi\otimes \pi,\C)\leq 1.
\]
\end{introtheorem}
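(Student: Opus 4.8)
The plan is to adapt the strategy of \cite{AGRS} so as to accommodate the metaplectic double cover and the (infinite-dimensional) oscillator representation. I would start from a Gelfand--Kazhdan type reduction. For each of the three cases one exhibits an anti-automorphism $\tau$ of a suitable ambient group (built from $G$ together with the Heisenberg group $H$ attached to the symplectic space) which preserves all the subgroups in play and which carries each irreducible representation occurring in the statement to a representation abstractly isomorphic to its contragredient; on the oscillator factor $\tau$ interchanges $\omega_\psi$ with $\omega_{\psi^{-1}}\cong\omega_\psi^\vee$. Granting such a $\tau$, the inequality $\dim\Hom_G(\pi\otimes\pi'\otimes\omega_\psi,\C)\le 1$ reduces to a statement about distributions: every suitably equivariant generalized function on an associated $G$-space $X$ is automatically invariant under the involution induced by $\tau$. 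Here the oscillator representation is folded into the geometry by Stone--von Neumann: tensoring with $\omega_\psi$ is governed by the Heisenberg group, so $X$ carries an extra Heisenberg (symplectic-space) factor compared with the Bessel situation of \cite{AGRS}, and one checks that the metaplectic $2$-cocycle splits compatibly over the unipotent and ``diagonal'' loci that occur, so that genuineness is preserved throughout.

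The distributional statement I would then attack by linearization. Harish-Chandra descent together with Bernstein's localization principle reduces it to a local assertion near the finitely many relevant semisimple classes and, passing to slice representations and normal/tangent spaces, to the following: certain distributions on a vector space of the shape $\g\times V\times(\text{Heisenberg part})$, invariant under the pertinent classical group, supported on a prescribed closed cone (roughly the nilpotent locus together with the degenerate vectors), and constrained by a homogeneity condition, must be fixed by the transpose. This is the Fourier--Jacobi analogue of the key lemma of \cite{AGRS}, the symplectic space $V$ on which $\omega_\psi$ is modeled being exactly the new ingredient. Its proof would combine the standard tools --- the Fourier transform and its interaction with support and homogeneity, a Jordan-type classification of the pertinent orbits with the induced stratification of the cone, and the special role of the Weil element --- together with an induction on $n$, the descent at each stage landing in a strictly smaller member of the same family, so that $\GL(n)$, $\oU(n)$ and $\Sp(2n)$ all fit into one inductive scheme.

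The step I expect to be the main obstacle is precisely this linearized key lemma in the presence of the extra Heisenberg directions. In \cite{AGRS} the ambient linear space is essentially $\g\times V$ with $V$ the standard module, whereas here one must carry along the additional variables coming from the Schwartz model of $\omega_\psi$ and its dual, which at once enlarges the space on which the invariant distributions live, alters the orbit geometry, and complicates the homogeneity bookkeeping. The crux is to show that these new directions produce no additional $\tau$-anti-invariant distributions --- that the Fourier-transform/homogeneity dichotomy still closes up --- and this is where a genuinely new geometric input beyond \cite{AGRS} is required. A secondary, more bookkeeping-type difficulty is to push the Gelfand--Kazhdan reduction cleanly through the metaplectic setting: arranging compatible splittings of the cover over all auxiliary subgroups, checking that the constructions preserve genuineness, and verifying that the passage between Hom spaces and invariant distributions respects the cover.
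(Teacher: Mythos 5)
Your outline follows the same overall route as the paper (Stone--von Neumann to fold $\omega_\psi$ into a Jacobi-group problem, Gelfand--Kazhdan with an extended group, linearization, Harish-Chandra descent, Bernstein localization, Fourier transform plus induction), but it stops short of the actual content: the linearized key lemma, which you yourself flag as the place ``where a genuinely new geometric input beyond \cite{AGRS} is required,'' is precisely the heart of the matter and is left unproved. In the paper this is the statement $\con^{-\infty}_{\chi_E}(\u(E)\times E)=0$ in the symplectic case, and its proof occupies Section 4 together with the appendix: after descent one must kill $\chi_E$-equivariant distributions supported in $\CN_E\times E$ orbit by orbit, treating non-distinguished nilpotent orbits via an uncertainty principle for wave front sets (a non-isotropic vector perpendicular to $\oT_{\mathbf e}(\CO_i)$, Lemma 4.2 and Theorem A.1), and distinguished orbits via the deformation $\eta_t(x,u)=(x+t\phi_u,u)$ forcing the support into $\{o\}\times E(o)$, the inclusion $E(\mathbf e)\subset E^+$, partial Fourier transform, Frobenius reciprocity, and the element $(g,-1)\in\breve{\oU}(E,\mathbf e)$ acting by $(-1)^n$ on $\mathbf h$-eigenspaces, which is what finally produces the sign contradiction. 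None of this is sketched in your proposal, so as it stands the argument has a gap exactly at its crux.

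Moreover, your diagnosis of where the difficulty lies is somewhat misplaced, and following it would make the problem harder than it is. In the paper the oscillator representation never enters the distributional analysis: since $\pi'$ is genuine, $\pi'\otimes\omega_\psi$ descends to an irreducible admissible representation of the linear Jacobi group $\oJ(E)=\oU(E)\ltimes\oH(E)$, so the Gelfand--Kazhdan criterion is applied to the pair $(\oJ(E),\oU(E))$ of honest t.d.\ groups and no metaplectic cocycle splittings or genuineness bookkeeping are needed anywhere in the distribution theory. Likewise there are no ``extra Schwartz-model directions'': after linearization the space is $\j(E)=\u(E)\times E\times A^{\tau=-\epsilon}$, where the Heisenberg center carries the trivial action and is removed at once by Bernstein's localization principle, leaving $\u(E)\times E$ --- the same shape as in \cite{AGRS}, with $E$ the standard symplectic module rather than an enlarged Heisenberg/oscillator space. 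Carrying the Schwartz model of $\omega_\psi$ and its dual into the orbit geometry, as you propose, is not what is required and is not carried out; the genuinely new input is instead the nilpotent-orbit analysis on $\sp(2n)\times E$ described above (which, via the descent to centralizers over extension algebras, also invokes the already-known general linear and unitary cases).
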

Recall that an irreducible admissible smooth representation of
$\widetilde G$ is said to be genuine if it does not descend to a
representation of $G$. For symplectic groups, Theorem B is
conjectured by D. Prasad in \cite[Page 20]{Pr96}. \vsp

The ``$\Hom$"-spaces in Theorem A and Theorem B are extreme cases of
their very important generalizations, namely, Bessel models and
Fourier-Jacobi models, respectively. For definitions of these
models, see \cite[Part 3]{GGP}, for example. As explained in
\cite{GPSR97}, uniqueness of Bessel models is the basic starting
point to study L-functions for orthogonal groups by the
Rankin-Selberg method. Similarly, uniqueness of Fourier-Jacobi
models is basic to study L-functions for symplectic groups and
metaplectic groups (\cite{GJRS09}). The importance of Theorem A and
Theorem B lies in the fact that they imply uniqueness of these
models in general (cf. \cite[Part 3]{GGP}, \cite[Theorem
4.1]{GJRS09} and \cite{JSZ09}). Various special cases of these
uniqueness are obtained in the literature (see \cite{Nov76,GPSR,
BFG92, GRS,BR00,GGP} for example).

\vsp

In order to prove our main results uniformly, we introduce the
following notation. By a commutative involutive algebra (over $\rk$), we mean a
finite product of finite field extensions of $\rk$, equipped with a
$\rk$-algebra involution on it. Let $(A,\tau)$ be a commutative
involutive algebra. Let $E$ be a finitely generated $A$-module. For $\epsilon=\pm 1$,
recall that a $\rk$-bilinear map
\[
  \la\,,\,\ra_E:E\times E\rightarrow A
\]
is called an $\epsilon$-Hermitian form if it satisfies
\[
     \la u,v\ra_E=\epsilon\la v,u\ra_E^\tau, \quad \la au,v\ra_E=a\la u,
     v\ra_E,\quad a\in A,\, u,v\in E.
\]
Assume that $E$ is an $\epsilon$-Hermitian $A$-module, namely it is
equipped with a non-degenerate $\epsilon$-Hermitian form
$\la\,,\,\ra_E$. Denote by $\oU(E)$ the group of all $A$-module
automorphisms of $E$ which preserve the form $\la\,,\,\ra_E$.
Depending on $\epsilon=1$ or $-1$, it is a finite product of general
linear groups, unitary groups, and orthogonal or symplectic groups.

View $A^2$ as a standard hyperbolic plane, i.e., it is equipped with
the $\epsilon$-Hermitian form $\la\,,\ra_{A^2}$ so that both $e_1$
and $e_2$ are isotropic, and that
\[
   \la e_1, e_2 \ra_{A^2}=1,
\]
where $e_1$, $e_2$ is the standard basis of $A^2$. The orthogonal
direct sum $E\oplus A^2$ is again an $\epsilon$-Hermitian
$A$-module.  Note that $\oU(E)$ is identified with the subgroup of
$\oU(E\oplus A^2)$ fixing both $e_1$ and $e_2$. Denote by $\oJ(E)$
the subgroup of $\oU(E\oplus A^2)$ fixing $e_1$.

To be explicit, view $E\oplus A^2$ as the space of column vectors of
$A\oplus E\oplus A$, then $\oJ(E)$ consists of all matrices of the
form
\[
 j(x,u,t):= \left[\begin{array}{ccc}
                  1&-u^\tau& t-\frac{\la u,u\ra_E}{2}\\
                   0 &x& xu\\
                    0&0&1\\
                   \end{array}\right],
\]
where
\[
  x \in \oU(E),\,\,u\in E,\,\, t\in A^{\tau=-\epsilon}:=\{t\in A\mid t^\tau=-\epsilon
  t\},
\]
and $u^\tau$ is the map
\[
   E\rightarrow A, \quad  v\mapsto\la v, u\ra_E.
\]
The unipotent radical of $\oJ(E)$ is
\[
   \oH(E):=\{j(1,u,t)\mid u\in E,\,t\in A^{\tau=-\epsilon}\}=E\times
   A^{\tau=-\epsilon},
\]
with multiplication
\[
   (u,t)(u',t')=(u+u', t+t'+\frac{\la u,u'\ra_E}{2}-\frac{\la
   u',u\ra_E}{2}).
\]
By identifying $j(x,u,t)$ with $(x,(u,t))$, we have
\[
  \oJ(E)=\oU(E)\ltimes \oH(E).
\]

\vsp

By Proposition \ref{repj} of Appendix \ref{appa}, Theorem \ref{intrb} is a consequence of the following theorem
in the skew-Hermitian case, namely when $\epsilon=-1$.

\begin{introtheorem}\label{thmc}
For every irreducible admissible smooth representation $\pi_{\oJ}$
of $\oJ(E)$, and every irreducible admissible smooth representation
$\pi_{\oU}$ of $\oU(E)$, one has that
\[
   \dim \Hom_{\oU(E)}(\pi_{\oJ}\otimes \pi_{\oU},\C)\leq 1.
\]
\end{introtheorem}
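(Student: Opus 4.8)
The plan is to follow the strategy of \cite{AGRS}: reduce the statement, through a Gelfand--Kazhdan-type criterion, to the symmetry of a space of invariant distributions, and then establish that symmetry by Harish-Chandra descent together with the ``uncertainty principle''.

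\emph{Step 1: reduction to invariant distributions.} For irreducible $\pi_{\oJ}$ and $\pi_{\oU}$ one has $\Hom_{\oU(E)}(\pi_{\oJ},\pi_{\oU})\cong\Hom_{\Delta\oU(E)}(\pi_{\oJ}\otimes\pi_{\oU}^{\vee},\C)$, where $\Delta\oU(E)$ denotes the copy of $\oU(E)$ diagonally embedded in $\oJ(E)\times\oU(E)$ via $\oU(E)\subset\oJ(E)$. I would construct an anti-involution $\sigma=\sigma_{\oJ}\times\theta$ of $\oJ(E)\times\oU(E)$ --- with $\sigma_{\oJ}$ an anti-involution of $\oJ(E)$ whose restriction to $\oU(E)$ is an anti-involution $\theta$ of $\oU(E)$ --- stabilising $\Delta\oU(E)$ and satisfying $\pi^{\sigma_{\oJ}}\cong\pi^{\vee}$ for every irreducible smooth representation $\pi$ of $\oJ(E)$ and $\pi^{\theta}\cong\pi^{\vee}$ for every irreducible smooth representation $\pi$ of $\oU(E)$. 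By the Gelfand--Kazhdan criterion in a form valid for this situation (cf.\ \cite{SZ}), the theorem follows once one shows that every distribution on $\oJ(E)\times\oU(E)$ invariant under the two-sided translation action of $\Delta\oU(E)$ is $\sigma$-invariant; and the substitution $(j,u)\mapsto(ju^{-1},u)$ identifies such distributions (all groups in sight being unimodular) with those of the form $(\text{Haar measure on }\oU(E))\otimes S$ for an $\Ad(\oU(E))$-invariant distribution $S$ on $\oJ(E)$, turning $\sigma$-invariance into $\sigma_{\oJ}$-invariance of $S$. It therefore remains to prove:
\[
 (\star)\qquad\text{every }\Ad(\oU(E))\text{-invariant distribution on }\oJ(E)\text{ is }\sigma_{\oJ}\text{-invariant.}
\]

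\emph{Step 2: construction of $\sigma_{\oJ}$ and the MVW property.} For $\oU(E)$ I would take $\theta$ to be a Moeglin--Vign\'eras--Waldspurger anti-involution, realised as conjugation of the inverse by a similitude of $E$; then $\pi^{\theta}\cong\pi^{\vee}$ for all irreducible $\pi$. For $\oJ(E)$ one uses the realisation $\oU(E)\subset\oJ(E)\subset\oU(E\oplus A^{2})$: pick a similitude $\delta$ of $E\oplus A^{2}$ fixing $e_{1}$ (so conjugation preserves $\oJ(E)=\Stab(e_{1})$) and normalising the line $A e_{2}$ with $\delta e_{2}=-e_{2}$ (so it preserves $\oU(E)=\Stab(e_{1},e_{2})$), chosen so that $g\mapsto\delta g^{-1}\delta^{-1}$ is an MVW anti-involution of $\oU(E\oplus A^{2})$; its restriction defines $\sigma_{\oJ}$ and $\theta$. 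On $\oH(E)$ the map $\sigma_{\oJ}$ then has the form $(u,t)\mapsto(\beta(u),t)$ --- in particular it fixes the centre $A^{\tau=-\epsilon}$ of $\oH(E)$ pointwise --- and one checks directly that this is an anti-automorphism of $\oH(E)$. Combining this with the Stone--von Neumann description of the irreducible representations of $\oJ(E)$ carrying a given nondegenerate central character of $\oH(E)$, namely (Heisenberg--Weil representation)$\,\otimes\,$(genuine irreducible of a metaplectic cover of $\oU(E)$), an MVW statement for that metaplectic cover, and an induction on $\dim_{\rk}E$ to dispose of degenerate central characters, one obtains $\pi^{\sigma_{\oJ}}\cong\pi^{\vee}$ for all irreducible $\pi$ of $\oJ(E)$.

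\emph{Step 3: proof of $(\star)$.} This is the analytic heart of the paper, and it is handled by the geometric method of \cite{AGRS}, uniformly across the classical groups comprising $\oU(E)$. First, Harish-Chandra descent around semisimple elements, together with an induction on $\dim_{\rk}E$ (the descended problems involve Jacobi and unitary groups attached to strictly smaller modules), reduces $(\star)$ to distributions supported near unipotent elements, and, after linearisation, to the infinitesimal statement: every $\oU(E)$-invariant distribution on $\u(E)\oplus E$ --- with $\oU(E)$ acting adjointly on $\u(E)$ and by the defining action on $E$, the central summand $A^{\tau=-\epsilon}$ of the Lie algebra of $\oH(E)$ being inert --- is invariant under the corresponding linear anti-involution. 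This model is exactly of the type treated in \cite{AGRS} (there the defining module appears as $V\oplus V^{*}$, here as the single self-dual module $E$): one applies Bernstein's localisation principle to stratify by the support and the nilpotent part, removes everything but the part supported at the origin by a further descent and the inductive hypothesis, and annihilates the remainder by the \emph{uncertainty principle} --- a distribution vanishes as soon as it and its Fourier transform are both supported on a cone that is too small --- the Fourier transform being furnished, compatibly with the $\oU(E)$-action, by the Weil representation of the symplectic group acting on $\u(E)\oplus E$.

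\emph{Main obstacle, and deduction of Theorem~B.} The hard part is Step~3, and within it the passage from the non-reductive group $\oJ(E)$, with its Heisenberg radical, down to the linear model $\u(E)\oplus E$: one must carry the Harish-Chandra descent through $\oJ(E)$, verify that the Heisenberg directions neither enlarge the relevant nilpotent strata nor disturb the compatibility of the Fourier transform with $\oU(E)$, and confirm that the surviving distributions lie in the regime where the uncertainty principle applies. Granting $(\star)$, Theorem~B follows at once: when $\epsilon=-1$, the representation $\pi'\otimes\omega_{\psi}$ descends to an irreducible admissible smooth representation $\pi_{\oJ}$ of $\oJ(E)=\oU(E)\ltimes\oH(E)$, and $\Hom_{G}(\pi\otimes\pi'\otimes\omega_{\psi},\C)\cong\Hom_{\oU(E)}(\pi_{\oJ},\pi^{\vee})$, which has dimension at most $1$ by the theorem just proved (Theorem~A being the input of \cite{AGRS} in the Hermitian case).
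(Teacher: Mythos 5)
Your overall architecture agrees with the paper's: a Gelfand--Kazhdan criterion reduces Theorem C to the statement that every $\Ad(\oU(E))$-invariant distribution on $\oJ(E)$ is invariant under the MVW-type anti-involution $j\mapsto \breve g\, j^{-1}\breve g^{-1}$, $\breve g\in\breve{\oU}(E)\setminus\oU(E)$ (your $(\star)$ is the paper's Theorem D), and this is attacked by Harish-Chandra descent, Jacquet--Rallis linearization, and Fourier analysis on the nilpotent cone, with the $\epsilon=1$ cases quoted from \cite{AGRS}. One genuine difference of route is your Step 2: you propose to establish $\pi^{\sigma_{\oJ}}\cong\pi^{\vee}$ for $\oJ(E)$ via Stone--von Neumann plus an MVW theorem for the metaplectic cover (and an extra induction for degenerate central characters). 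The paper never needs this: in its form of the Gelfand--Kazhdan criterion the required isomorphisms $\pi^{\vee}\cong\pi^{\sigma'}$ are obtained by applying the distribution symmetry itself (Theorem D, and its well-known analogue on $\oU(E)$) to the characters of the irreducible representations, which are conjugation-invariant generalized functions. This is lighter and bypasses the metaplectic MVW input entirely.

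The genuine gap is in Step 3, at exactly the point that constitutes the paper's main new content. The infinitesimal statement $\con^{-\infty}_{\chi_E}(\u(E)\times E)=0$ in the symplectic case ($\epsilon=-1$, $\tau$ trivial) is \emph{not} ``exactly of the type treated in AGRS'': \cite{AGRS} covers the general linear, orthogonal and unitary situations, and those results enter here only through the Harish-Chandra descent to centralizers of semisimple elements. Within the null cone, the uncertainty-principle argument requires a non-isotropic vector of $\u(E)$ perpendicular to the tangent space of the nilpotent orbit, and such a vector exists only when the orbit is \emph{not} distinguished; for distinguished orbits of the symplectic Lie algebra (e.g.\ the regular one) your ``descent plus inductive hypothesis plus uncertainty principle'' scheme simply does not apply. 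The paper handles this case by a different mechanism: the deformation $\eta_t:(x,u)\mapsto(x+t\phi_u,u)$ confines the support over an orbit point $o$ to $\{o\}\times E(o)$; a partial Fourier transform in the $E$-variable only, combined with Frobenius reciprocity to the stabilizer of $\mathbf e$, together with the inclusion $E(\mathbf e)\subset E^{+}$, forces the resulting distribution to be a multiple of a Haar measure on $E^{+}$; and finally the element $(g,-1)\in\breve{\oU}(E,\mathbf e)$ acting by $(-1)^{n}$ on the $\mathbf h$-eigenspace of eigenvalue $2n+1$ (available precisely because distinguished symplectic orbits make all $\mathbf h$-eigenvalues on $E$ odd) preserves that Haar measure, so $\chi_E$-equivariance kills it. None of this appears in your proposal, and the difficulty you single out as the main obstacle --- carrying the descent and Fourier transform through the Heisenberg radical --- is comparatively routine, since the central directions $A^{\tau=-\epsilon}$ are inert and are disposed of by Bernstein's localization principle and the exponential map. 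As written, the proposal therefore does not prove the theorem in the crucial case $\oU(E)=\Sp(2n)$.
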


\vsp

To prove Theorem C by the method of Gelfand-Kazhdan, we extend
$\oU(E)$ to a larger group, which is denoted by $\breve{\oU}(E)$,
and is defined to be the subgroup of $\GL(E_\rk)\times
\{\pm 1\}$ consists of pairs $(g,\delta)$ such that either
\[
  \delta=1 \quad\textrm{and}\quad \la gu,gv\ra_E=\la u,v\ra_E,\,\,\,  u,v\in E,
\]
or
\[
     \delta=-1 \quad\textrm{and}\quad
     \la gu,gv\ra_E=\la v,u\ra_E,\,\,\,  u,v\in E.
  \]
Here $E_\rk$ is the underlying $\rk$-vector space of $E$. Note that for every element
$(g,\delta)\in \breve{\oU}(E)$, if $\delta=1$, then $g$ is
automatically $A$-linear, and if $\delta=-1$, then $g$ is
$\tau$-conjugate linear. This extended group is first introduced
implicitly by Moeglin-Vigneras-Waldspurger in \cite[Proposition
4.I.2]{MVW87}. It contains $\oU(E)$ as a subgroup of index two. Let
$\breve{\oU}(E)$ act on $\oH(E)$ as group automorphisms by
\begin{equation}\label{gtaction}
     (g,\delta).(u,t):=(gu, \delta t).
\end{equation}
This extends the adjoint action of $\oU(E)$ on $\oH(E)$. The
semidirect product
\[
   \breve{\oJ}(E):=\breve{\oU}(E)\ltimes\oH(E)
\]
contains
\[
   \oJ(E)=\oU(E)\ltimes\oH(E)
\]
as a subgroup of index two.

By Corollary \ref{gkmo2} of Appendix \ref{appb}, Theorem \ref{thmc} is implied by
\begin{introtheorem}
Let $f$ be a generalized function on $\oJ(E)$. If it is invariant
under the adjoint action of $\oU(E)$, i.e.,
\[
    f(g j g^{-1})=f(j),\quad \textrm{for all } g\in\oU(E),
\]
then
\[
    f(\breve{g}j^{-1}\breve{g}^{-1})=f(j),\quad \textrm{for all } \breve{g}\in
    \breve{\oU}(E)\setminus \oU(E).
\]
\end{introtheorem}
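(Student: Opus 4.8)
The plan is to follow the Gelfand–Kazhdan criterion strategy familiar from \cite{AGRS} and \cite{SZ}, reducing the invariance statement for generalized functions on the group $\oJ(E)$ to an analogous (and more tractable) statement at the level of Lie algebras or, more precisely, on the relevant configuration spaces with their stratifications. First I would set up the involutive anti-automorphism: define $\sigma\colon \oJ(E)\to\oJ(E)$ by $\sigma(j)=\breve g_0\, j^{-1}\,\breve g_0^{-1}$ for a fixed $\breve g_0\in\breve{\oU}(E)\setminus\oU(E)$, noting that $\sigma$ is an anti-automorphism of $\oJ(E)$ whose square is inner (conjugation by $\oU(E)$), and that the claim is exactly that every $\oU(E)$-invariant generalized function is $\sigma$-invariant. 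Because $\breve{\oJ}(E)=\breve{\oU}(E)\ltimes\oH(E)$ contains $\oJ(E)$ with index two and conjugation by $\breve g_0$ implements $\sigma$ up to $j\mapsto j^{-1}$, the problem is really about the adjoint action of $\breve{\oU}(E)$ on $\oJ(E)$ twisted by inversion — which is the shape in which the MVW involution \cite[Proposition 4.I.2]{MVW87} is designed to be used.

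The key steps, in order: (1) stratify $\oJ(E)=\oU(E)\ltimes\oH(E)$ compatibly with the projection to $\oU(E)$ and with the adjoint $\oU(E)$-action; on each fiber the relevant data is an element $x\in\oU(E)$ together with a point of the Heisenberg group $\oH(E)=E\times A^{\tau=-\epsilon}$, and the $\oU(E)$-orbit structure is governed by the centralizer $\oU(E)_x$ acting on $\oH(E)$. (2) Reduce, by a standard Bernstein–localization/Frobenius-reciprocity argument over the base $\oU(E)$ (as in \cite{AGRS} and \cite{SZ}), to showing that on each stratum — indexed by a $\tau$-semisimple-plus-nilpotent type of $x$ — there are no nonzero $\oU(E)_x$-invariant, $\sigma$-anti-invariant generalized functions supported there, with appropriate transverse-derivative (normal-bundle) bookkeeping. (3) Handle the semisimple part by passing to centralizers: the centralizer of a $\tau$-semisimple $x$ in $\oU(E)$ is again a product of unitary groups of Hermitian/skew-Hermitian modules over smaller involutive algebras, so the problem becomes a finite family of problems of the same type for smaller $E$, enabling an induction on $\dim_\rk E$. (4) Treat the residual "nilpotent cone" case — where $x$ is close to a unipotent element and the Heisenberg variable is constrained — by an explicit analysis: here one uses that $\breve g_0$ acts on $\oH(E)$ by $(u,t)\mapsto(\breve g_0 u,\epsilon\cdot(-\epsilon)t)=(\breve g_0 u, -t)$ relative to inversion in a way that, combined with the conjugation action, makes every invariant distribution automatically anti-invariant, typically via an $\SL_2$ or a one-parameter scaling argument that forces the support to shrink.

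I expect the main obstacle to be step (4) together with the "transversal" estimates in step (2): controlling generalized functions supported on the deepest stratum, where the naive dimension count does not immediately rule out invariant anti-invariant distributions, and one must bring in the oscillator/Weil-representation structure of $\oH(E)$ — concretely, the fact that $\oU(E)$ acts on the space of Schwartz functions on a Lagrangian of $E$ and that the MVW involution intertwines $\omega_\psi$ with its contragredient — to kill the offending distributions. A secondary technical nuisance is bookkeeping the index-two extension $\breve{\oU}(E)/\oU(E)$ throughout the stratification, since the twisting element $\breve g_0$ must be chosen compatibly with each centralizer subgroup appearing in the induction; this is routine but must be done carefully so that the inductive hypothesis applies verbatim to the smaller modules. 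Once these are in place, the theorem follows by assembling the stratum-by-stratum vanishing into a global statement via the standard descent-of-support argument.
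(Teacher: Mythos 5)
Your overall framework is the right one and matches the paper's Sections 2--3 in outline: recast the claim as the vanishing of $\chi_E$-equivariant generalized functions on $\oJ(E)$ for the inversion-twisted action of $\breve{\oU}(E)$, perform Harish-Chandra descent at non-central semisimple elements (whose centralizers are again unitary groups of $\epsilon$-Hermitian modules over larger commutative involutive algebras, of strictly smaller $\sdim$), and use Bernstein localization together with the exponential map to linearize, reducing everything to the vanishing of $\con^{-\infty}_{\chi_E}(\u(E)\times E)$; note that the central Heisenberg variable $A^{\tau=-\epsilon}$ carries the trivial twisted action and is simply localized away, and that the Weil representation plays no role whatsoever in the proof of this theorem (it enters only later, when Theorem B is deduced from Theorem C).

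The genuine gap is your step (4), which is precisely where all the new content lies. After descent one is left with equivariant distributions on $\u(E)\times E$ supported in $\CN_E\times E$, and neither a one-parameter scaling nor the MVW involution intertwining $\omega_\psi$ with its contragredient disposes of them: the latter statement is of the same depth as what is being proved and gives no handle on distributions supported on the null cone. What the paper actually does (in the symplectic case, orbit by orbit along a closed filtration of $\CN_E$) is: for a non-distinguished orbit, apply a new uncertainty-principle theorem for distributions (proved in the appendix via p-adic wave front sets) to the Fourier transform with respect to the trace form, using a non-isotropic vector perpendicular to the tangent space of the orbit; for a distinguished orbit, use the deformation $\eta_t(x,u)=(x+t\phi_u,u)$ to pinch the support into the sets $\{o\}\times E(o)$, then Frobenius reciprocity at a base point $\mathbf e$, the partial Fourier transform in the $E$-variable, the inclusion $E(\mathbf e)\subset E^{+}$ coming from a Jacobson--Morozov triple and the classification of distinguished orbits, and finally the explicit element $(g,-1)\in\breve{\oU}(E,\mathbf e)$ acting by $(-1)^{n}$ on the $\mathbf h$-eigenspace of eigenvalue $2n+1$, which preserves the Haar measure of $E^{+}$ and forces the distribution to vanish. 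None of this is supplied, or correctly anticipated, by your sketch. There is also a structural gap: as you set up the induction on $\dim_\rk E$, you would have to reprove the general linear and unitary cases, since descent in the symplectic case produces such factors; the paper instead quotes the infinitesimal form of the AGRS theorem for $\epsilon=1$ and reduces $\epsilon=-1$ with $\tau$ nontrivial on $A$ to $\epsilon=1$ by rescaling the form by an element $c_A\in\u(A)\cap A^{\times}$, so that only the symplectic null cone requires new work. Without these ingredients the argument does not close.
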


The usual notion of generalized functions, as well as the idea of the proof of Theorem D, will be explained in the next section. \vsp

\vsp

The author thanks Gerrit van Dijk, Dipendra Prasad, Lei Zhang and
Chen-Bo Zhu for helpful comments, and thanks the referee for many nice suggestions to improve the paper. He is grateful to Dihua Jiang for teaching him the method of Gelfand-Kazhdan.

\section{Proof of Theorem D}\label{pd}

We first recall some basic notions and facts about distributions and
generalized functions.  By a t.d. space, we mean a topological space
which is Hausdorff, secondly countable, locally compact and totally
disconnected. By a t.d. group, we mean a topological group whose
underlying topological space is a t.d. space. For a t.d space $M$,
denote by $\con^\infty_0(M)$ the space of compactly supported,
locally constant (complex valued) functions on $M$. Denote by
$\oD^{-\infty}(M)$ the space of linear functionals on
$\con^\infty_0(M)$. Such functionals are called distributions on
$M$. If $M$ is furthermore a locally analytic $\rk$-manifold or a
t.d. group (see \cite[Part II]{Sc08} for the notion of locally
analytic manifolds), denote by $\oD^\infty_0(M)$ the space of
compactly supported distributions on $M$ which are locally scalar
multiples of Haar measure. In this case, denote by
$\con^{-\infty}(M)$ the space of linear functionals on
$\oD^\infty_0(M)$. Such functionals are called generalized functions
on $M$.

Let $\varphi: M\rightarrow N$ be a continuous map of t.d. spaces. If
it is proper, then we define the push forward map
\[
  \varphi_*: \oD^{-\infty}(M)\rightarrow \oD^{-\infty}(N)
\]
as usual. Recall that $\varphi$ is said to be proper if
$\varphi^{-1}(C)$ is compact for every compact subset $C$ of $N$.
When $\varphi$ is a closed embedding, $\varphi_*$ identifies
$\oD^{-\infty}(M)$ with distributions in $\oD^{-\infty}(N)$ which
are supported in $\varphi(M)$. If $\varphi$ is not proper, then the
push forward map is still defined, but only for distributions with
compact support.

If  $\varphi: M\rightarrow N$ is a submersion of locally analytic
$\rk$-manifolds, then the push forward map sends $\oD^\infty_0(M)$
into $\oD^\infty_0(N)$, and its transpose defines the pull back map
\[
  \varphi^*: \con^{-\infty}(N)\rightarrow \con^{-\infty}(M).
\]
When $\varphi$ is a surjective submersion, $\varphi^*$ is injective.

If $G$ is an (abstract) group acting continuously on a t.d. space
$M$, then for any group homomorphism $\chi_G:G\rightarrow
\C^\times$, put
\[
   \oD^{-\infty}_{\chi_G}(M):=\{\omega\in \oD^{-\infty}(M)\mid (T_g)_*\,\omega=\chi_G(g)\,\omega,\quad g\in
   G\},
\]
where $T_g:M\rightarrow M$ is the map given by the action of $g\in
G$.  If furthermore $M$ is a locally analytic $\rk$-manifold, and
the action of $G$ on it is also locally analytic, denote by
\[
   \con^{-\infty}_{\chi_G}(M)\subset \con^{-\infty}(M)
\]
the subspace consisting of all $f$ which are $\chi_G$-equivariant,
i.e.,
\[
   f(g.x)=\chi_G(g)f(x),\quad \textrm{for all } g\in
   G,
\]
or to be precise,
\[
  T_g^*(f)=\chi_G(g)f,\quad \textrm{for all } g\in
   G.
\]

\vsp

Now we return to the notation of the Introduction. Recall that $E$
is an $\epsilon$-Hermitian $A$-module. Denote by
\[
  \chi_E: \breve{\oU}(E)\rightarrow \{\pm 1\}, \quad
  (g,\delta)\mapsto \delta
\]
the quadratic character projecting to the second factor. Let
$\breve{\oU}(E)$ act on $\oJ(E)$ by
\begin{equation}\label{actg}
   \breve{g}.j:=\breve{g} \,j^{\,\chi_E(\breve g)}\,\breve{g}^{-1}.
\end{equation}
Then Theorem D is clearly equivalent to
\begin{thms} One has that
$\con^{-\infty}_{\chi_E}(\oJ(E))=0$.
\end{thms}

\vsp

The Lie algebra of (the $\rk$-linear algebraic group)
$\oU(E)$ is
\[
   \u(E):=\{x\in \End_A(E)\mid \la xu,v\ra_E+\la u,xv\ra_E=0,\,u,v\in E\}.
\]
Let $\breve{\oU}(E)$ act on $\u(E)$ and $E$ by
\begin{equation}\label{actioninf}
   \left\{
     \begin{array}{ll}
        (g,\delta).x:=\delta \,gxg^{-1},\quad& x\in \u(E),\medskip\\
        (g,\delta).u:=\delta \,gu,\quad & u\in E,
    \end{array}
  \right.
 \end{equation}
and act on $\u(E)\times E$ diagonally. The linear version of
Theorem D$'$ is
\begin{thmss} One has that
$\con^{-\infty}_{\chi_E}(\u(E)\times E)=0$.
\end{thmss}

A commutative involutive algebra is said to be simple if
it is either a field or a product of two isomorphic fields which are
exchanged by the involution. Write
\[
  (A,\tau)=(A_1,\tau_1)\times (A_2,\tau_2)\times \cdots\times (A_k,\tau_k)
\]
as a product of simple commutative involutive algebras. Then
\[
  E=E_1\times E_2\times \cdots \times E_k,
\]
where $E_j:=A_j\otimes_A E$ is obviously an $\epsilon$-Hermitian $A_j$-module. Note that $E_j$ is free as an $A_j$-module. Put
\[
  \sdim(E):=\dim_{\rk}(E)+\sum_{j=1}^k \max\{\rank_{A_j}(E_j)-1,\,0\}.
\]

Denote by  $\oZ(E)$ the center of $\oU(E)$, and by $\CU_E$ the set of unipotent elements in $\oU(E)$. The following result is proved in Section \ref{reduction}:

\begin{prp}\label{descent11}
Assume that for all commutative involutive algebras $A^\circ$ and
all $\epsilon$-Hermitian $A^\circ$-modules $E^\circ$,
\begin{equation}\label{vanishep1}
  \sdim(E^\circ)<\sdim(E) \quad\textrm{implies}\quad \con^{-\infty}_{\chi_{E^\circ}}(\oJ(E^\circ))=0.
\end{equation}
Then every $f\in \con^{-\infty}_{\chi_{E}}(\oJ(E))$ is supported in
$(\oZ(E)\CU_E)\ltimes \oH(E)$.
\end{prp}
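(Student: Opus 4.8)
The plan is to run a Harish-Chandra descent argument on the $\oU(E)$-space $\oJ(E) = \oU(E)\ltimes\oH(E)$, localizing at the semisimple parts of the $\oU(E)$-component. Given $f\in\con^{-\infty}_{\chi_E}(\oJ(E))$ and a point $j_0 = (x_0,(u_0,t_0))$ in its support, write $x_0 = s_0 v_0$ for the Jordan decomposition with $s_0$ semisimple and $v_0$ unipotent, both commuting and both lying in $\oU(E)$. First I would show that $s_0$ may be taken to be central, i.e., $s_0\in\oZ(E)$: if not, the centralizer $\oU(E)^{s_0}$ is a product $\prod_i \oU(E_i)$ of unitary-type groups over smaller $\epsilon$-Hermitian modules $E_i$, with $\sum_i\sdim(E_i) < \sdim(E)$ strictly (because a noncentral semisimple element forces a nontrivial eigenspace decomposition, shrinking the rank in at least one factor). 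Using the standard theory of push-forward along the map $\oU(E)\times_{\oU(E)^{s_0}}(\text{slice})\to\oU(E)$ — a submersion onto an open $\oU(E)$-invariant neighborhood of the orbit of $s_0$ — one transfers the germ of $f$ near the $\oU(E)$-orbit of $j_0$ to a $\chi_{E^{s_0}}$-equivariant generalized function on $\oJ(E^{s_0}) = \oJ(E_1)\times\cdots$; the point is that the $\oH(E)$-part and the $\breve{\oU}(E)$-twisting are compatible with this slicing since $\breve{\oU}(E)$ acts on $\oH(E)$ through $\chi_E$ and the action \eqref{gtaction} restricts correctly. By the inductive hypothesis \eqref{vanishep1}, applied to each factor $E_i$ — noting that $\con^{-\infty}_{\chi_{E^\circ}}$ of a product vanishes as soon as it vanishes on one factor — this transferred germ is zero, so $j_0$ is not in the support after all. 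Hence the $\oU(E)$-component of every point of $\Supp f$ lies in $\oZ(E)\CU_E$.

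There is a subtlety: to even begin the slice argument one needs $s_0$ to have an $\oU(E)$-invariant open neighborhood in $\oU(E)$ on which the centralizer structure is constant, and one must check that the slice representation at a semisimple point of $\oU(E)$ (not of $\u(E)$) is again of the form $\oJ$ of a smaller Hermitian module — this is where the explicit matrix description of $\oJ(E)$ and the decomposition $\oJ(E) = \oU(E)\ltimes\oH(E)$ are used, together with the fact that $\oH(E)$ decomposes compatibly with the eigenspace decomposition of $s_0$, modulo a contribution from the "fixed part" $E^{s_0}$ that becomes the new Heisenberg group and a complementary part that produces only a twist by an additive character, which is harmless because such twisted spaces of generalized functions on a p-adic vector group vanish unless the character is trivial — and when it is trivial one lands exactly in $\oJ(E^{s_0})$. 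I would handle the case where $s_0$ is central but distinct from $1$ separately and trivially: then $\oU(E)^{s_0} = \oU(E)$, $E^{s_0} = E$, no descent in $\sdim$ occurs, and one simply records that $s_0\in\oZ(E)$, which is all the conclusion asks for; the unipotent factor $v_0$ is then unconstrained, consistent with the target support $(\oZ(E)\CU_E)\ltimes\oH(E)$.

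The main obstacle I anticipate is the bookkeeping in the slice/descent step: verifying cleanly that the Harish-Chandra descent for the \emph{group} action of $\oU(E)$ on $\oJ(E)$ (conjugation twisted by inversion on the index-two coset, per \eqref{actg}) transports $\chi_E$-equivariance to $\chi_{E^{s_0}}$-equivariance on the slice, and that the semisimple descent on the Heisenberg part does not introduce obstructions — in particular checking that the relevant "transverse" directions contribute only additive-character-twisted distributions on $\rk$-vector spaces, which vanish off the trivial character. This is the technical heart; once it is in place, the inductive vanishing hypothesis does the rest and the proposition follows. A secondary point to be careful about is that $\con^{-\infty}_{\chi_E}$ is defined via compactly supported distributions that are locally Haar, so all push-forwards and pull-backs must be taken in the correct category (submersions for pull-back of generalized functions, proper maps or compact supports for push-forward of distributions); the localization is legitimate because supports are closed and $\oU(E)$-invariant.
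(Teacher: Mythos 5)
Your overall strategy---localize at the semisimple part of the $\oU(E)$-component and descend to its centralizer, then invoke the inductive hypothesis---is the same as the paper's, and your first paragraph lands close to the correct slice. The genuine gap is in the treatment of the Heisenberg factor, exactly the point you flag as the technical heart: you claim that after slicing only the ``fixed part'' $E^{s_0}$ survives as the vector part of the new Heisenberg group, while the complementary eigenspaces contribute ``only a twist by an additive character'' whose equivariant distributions vanish unless the character is trivial. Nothing of the sort happens here. Under the actions \eqref{gtaction} and \eqref{actg}, $\oU(E)$ acts on $\oH(E)=E\times A^{\tau=-\epsilon}$ linearly on $E$ and trivially on the central coordinate; no character of $\oH(E)$, and no additive character of a transverse vector group, enters this proposition at all, and the vanishing statement you invoke is not true as stated (a generalized function on a $p$-adic vector group that transforms under translations by a nontrivial character $\psi$ is a multiple of $\psi$, not zero). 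Worse, discarding the non-fixed eigenspaces of $s_0$ would disconnect you from the hypothesis \eqref{vanishep1}, which concerns $\oJ(E^\circ)=\oU(E^\circ)\ltimes\oH(E^\circ)$ with the \emph{full} module $E^\circ$ as the Heisenberg vector part; a space of the shape $\oU(E^\circ)\times(\textrm{proper subspace of }E)\times A^{\tau=-\epsilon}$ is not covered by it. Your second paragraph in fact contradicts your first, which had the descended object as $\oJ(E_1)\times\cdots$ with all of $E$ retained.

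The correct bookkeeping, as in the paper, keeps all of $E$: for a noncentral semisimple $x\in\oU(E)$ one forms the commutative involutive algebra $A_x\subset\gl_A(E)$ generated by $x$, $x^\tau$ and $A$, and regards $E_x:=E$ as an $\epsilon$-Hermitian $A_x$-module via the unique form with $\tr_{A_x/A}\circ\la\,,\,\ra_{E_x}=\la\,,\,\ra_E$; then the centralizer of $x$ is $\oU(E_x)$ and $\sdim(E_x)<\sdim(E)$. The descent is implemented by the homomorphism $\xi_x:\oJ(E_x)\to\oJ(E)$, $(y,(u,t))\mapsto(y,(u,\tr_{A_x/A}(t)))$ (the trace absorbs the mismatch between the centers $A_x^{\tau=-\epsilon}$ and $A^{\tau=-\epsilon}$), restricted to the open locus where $\det\bigl(1-\Ad_{y^{-1}}\bigr)\neq 0$ on $\u(E)/\u(E_x)$; the map $(\breve g,j)\mapsto\breve g.\xi_x(j)$ is a submersion, and pulling $f$ back along it yields an element of $\con^{-\infty}_{\chi_{E_x}}(\oJ(E_x)^\circ)$, which dies by \eqref{vanishep1} applied directly to $(A_x,E_x)$. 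Note that the hypothesis is stated for arbitrary commutative involutive algebras precisely so that no factor-by-factor reduction is needed; your assertion that equivariant generalized functions on a product vanish ``as soon as they vanish on one factor'' is itself unjustified, since the relevant extended group for a product is only the fiber product of the $\breve{\oU}(E_i)$ over $\{\pm 1\}$, not the full product. Finally, two points you pass over (as does the paper, but with citations) still need some argument: that vanishing on all of $\oJ(E_x)$ yields vanishing on the invariant open subset $\oJ(E_x)^\circ$ where the descended function lives, and the inequality $\sdim(E_x)<\sdim(E)$ for $x\notin\oZ(E)$.
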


\vsp

With the idea of linearlization by Jaquet-Rallis (\cite{JR96}) in
mind, and based on Proposition \ref{descent11}, we prove the following implication in Section \ref{lin}.
\begin{prp}\label{linear} Theorem D$''$ implies Theorem $D'$.
\end{prp}

\vsp
Now we need to prove Theorem D$''$. Clearly, the case of simple $(A,\tau)$ implies Theorem D$''$ in general. Note that Theorem D$''$ is known when $(A,\tau)$ is simple and $\epsilon=1$. (This is the linear version of the main results (Theorem 2 and Theorem 2$'$) of \cite{AGRS}.) When $\epsilon=-1$, $(A,\tau)$ is simple and $\tau$ is nontrivial, take an nonzero element $c_A\in A$ such that $c_A^\tau=-c_A$, then $(E, c_A\la\,,\,\ra_E)$ is a $-\epsilon$-Hermitian $A$-module, and Theorem D$''$ reduces to the case of $\epsilon=1$. Therefore it remains to prove Theorem D$''$ in the symplectic case, namely, when $A$ is a field, $\tau$ is trivial, and $\epsilon=-1$.

\vsp

Denote by $\z(E)$ the Lie algebra of $\oZ(E)$. It consists of all elements in $\u(E)$ which are scalar multiplications (by certain elements of $A$). Denote by $\CN_E$ the set of all elements in $\u(E)$ which are nilpotent as $\rk$-linear operators on $E$. We first reduce the problem to the null cone as in Proposition \ref{descent11}:

\begin{prp}\label{descent22}
Assume that for all commutative involutive algebras $A^\circ$ and
all $\epsilon$-Hermitian $A^\circ$-modules $E^\circ$,
\begin{equation}\label{vanishep1}
  \sdim(E^\circ)<\sdim(E) \quad\textrm{implies}\quad \con^{-\infty}_{\chi_{E^\circ}}(\u(E^\circ)\times E^\circ)=0.
\end{equation}
Then every $f\in \con^{-\infty}_{\chi_{E}}(\u(E)\times E)$ is supported in
$(\z(E)+\CN_E)\times E$.
\end{prp}

The proof of Proposition \ref{descent22} is similar to that of Proposition \ref{descent11}, and is also carried out in Section \ref{reduction}.

\vsp

Let
\[
  \CN_E=\CN_0\supset \CN_1\supset \cdots \supset \CN_r=\{0\}\supset
  \CN_{r+1}=\emptyset
\]
be a filtration of $\CN_E$ by its closed subsets so that each
difference
\[
  \CO_i:=\CN_i\setminus \CN_{i+1},\quad 0\leq i\leq r,
\]
is a $\breve{\oU}(E)$-orbit (which is also a $\oU(E)$-orbit by \cite[Proposition
4.I.2]{MVW87}).

\begin{prp}\label{indn} Assume that $(A,\tau)$ is simple. Fix $i=0,1,\cdots, r$. If every generalized function in $\con^{-\infty}_{\chi_E}(\u(E)\times E)$ is supported in $(\z(E)+\CN_i)\times E$, then every generalized function  in $\con^{-\infty}_{\chi_E}(\u(E)\times E)$ is supported in $(\z(E)+\CN_{i+1})\times E$.
\end{prp}

Recall that the nilpotent orbit $\CO_i$ is said to be distinguished if one (or every) element of it
commutes with no non-scalar semisimple element in $\u(E)$ (cf. \cite[Section 8.2]{CM}). Use an uncertainty theorem for distributions with supports (Theorem \ref{uncert} of Appendix C), we prove Proposition \ref{indn} for non-distinguished $\CO_i$ in Section \ref{rnd}. We have to go case by case for the proof of Proposition \ref{indn} for distinguished $\CO_i$. It is carried out in the symplectic case in Section \ref{rd}. As explained before, this is the only case which is not done in \cite{AGRS}.

Now we are prepared to prove Theorem D$''$ by induction on $\sdim(E)$. If $\sdim(E)=0$, then $E=0$ and Theorem D$''$ is trivial. Assume that $\sdim(E)>0$ and Theorem D$''$ is proved when $\sdim(E)$ is smaller. Without loss of generality, assume that $(A,\tau)$ is simple.  By Proposition \ref{descent22}, every $f\in \con^{-\infty}_{\chi_{E}}(\u(E)\times E)$ is supported in
$(\z(E)+\CN_E)\times E$, and it has to vanish by Proposition \ref{indn}. This proves Theorem D$''$.

\section{Proofs of Proposition \ref{descent11} and Proposition \ref{descent22}}\label{reduction}

We continue with the notation of the last section. The involution $\tau$ on $A$ extends to an
anti-involution on $\End_A(E)$, which is still denoted by $\tau$, by
requiring that
\[
  \la x^\tau u,v\ra_E=\la u, xv\ra_E, \quad u,v\in E.
\]
Recall that an element $x\in \End_A(E)$ is said to be normal if $x$ commutes with $x^\tau$. For every normal semisimple element $x\in \End_A(E)$, denote by $A_x$ the subalgebra of $\End_A(E)$
generated by $x$, $x^\tau$ and scalar multiplications by $A$. Then $(A_x,\tau)$ is again a
commutative involutive algebra. Write $E_x:=E$, viewed as an $A_x$-module.
\begin{lemp}\label{sdim}
There is a unique $\epsilon$-Hermitian form $\la\,,\,\ra_{E_x}$ on
the $A_x$-module $E_x$ such that
\[
 \dim_\rk(A)\,\, \tr_{A_x/\rk}(\la u,v\ra_{E_x})=\dim_\rk(A_x)\,\, \tr_{A/\rk}(\la u,v\ra_E),\quad u,v\in E.
\]
\end{lemp}

\begin{proof}
The form is determined by requiring
that
\[
   \dim_\rk(A)\,\,  \tr_{A_x/\rk}(a \la u,v\ra_{E_x})=\dim_\rk(A_x)\,\, \tr_{A/\rk}(\la au,v\ra_E),\quad a\in A_x,\,u,v\in E.
\]
\end{proof}

Therefore $E_x$ is an $\epsilon$-Hermitian $A_x$-module. We omit the proof of the following elementary lemma.
\begin{lemp}\label{sdim}
If $x\in \oU(E)\setminus\oZ(E)$ or $\u(E)\setminus \z(E)$, and $x$ is semisimple, then
\[
   \sdim(E_x)<\sdim(E).
\]
\end{lemp}

\vsp

\begin{proof}[Proof of Proposition \ref{descent11}]
Now we come to the proof of Proposition \ref{descent11}. Without loss of generality, assume that $E$ is faithful as an $A$-module. Let $x$ be a semisimple element in $\oU(E)\setminus\oZ(E)$. Note that
$\breve{\oU}(E_x)$ is a subgroup of $\breve{\oU}(E)$. Recall the action of $\breve{\oU}(E)$ on $\oJ(E)$ (and similarly $\breve{\oU}(E_x)$ on $\oJ(E_x)$) from \eqref{actg}.
The homomorphism
\[
    \begin{array}{rcl}
    \xi_x: \oJ(E_x)=\oU(E_x)\ltimes (E_x\times A_x^{\tau=-\epsilon}) &\rightarrow& \oJ(E)=\oU(E)\ltimes (E\times A^{\tau=-\epsilon}) ,\\
             (y,(u,t))&\mapsto & (y,(u,\tr_x(t))
          \end{array}
\]
is $\breve{\oU}(E_x)$-intertwining, where $\tr_x:A_x\rightarrow A$ is the $A$-linear map specified by requiring that
\[
     \dim_\rk(A_x)\,\, \tr_{A/\rk}(\tr_x(t))=  \dim_\rk(A)\,\,\tr_{A_x/\rk}(t),\quad t\in A_x.
\]
Faithfulness of $E$ implies that the map $\tr_x$ is surjective.

For any
\[
  j=(y,h)\in \oJ(E_x)=\oU(E_x)\ltimes \oH(E_x),
\]
denote by $J(j)$  the determinant of the $\rk$-linear map
\[
   1-\Ad_{y^{-1}}: \u(E)/\u(E_x)\rightarrow \u(E)/\u(E_x).
\]
Note that $\Ad_{y}$ preserves a non-degenerate $\rk$-quadratic form
on $\u(E)/\u(E_x)$, which implies that $J$ is
$\breve{\oU}(E_x)$-invariant. Put
\[
  \oJ(E_x)^\circ:=\{j\in \oJ(E_x)\mid J(j)\neq 0\}.
\]
It contains the set $x\CU_{E_x}\ltimes \oH(E_x)$.

One easily checks that the map
\[
 \begin{array}{rcl}
   \rho_x: \breve{\oU}(E)\times \oJ(E_x)^\circ &\rightarrow& \oJ(E),\\
             (\breve{g},j)&\mapsto & \breve{g}.(\xi_x(j))
          \end{array}
\]
is a submersion, and we have a well defined map (cf. \cite[Lemma
2.5]{JSZ})
\begin{equation}\label{reex}
  r_x: \con^{-\infty}_{\chi_{E}}(\oJ(E))\rightarrow  \con^{-\infty}_{\chi_{E_x}}(\oJ(E_x)^\circ),
\end{equation}
which is specified by the rule
\[
   \rho_x^*(f)=\chi _E\otimes  r_x(f),\quad f\in \con^{-\infty}_{\chi_{E}}(\oJ(E)).
\]
Lemma \ref{sdim} and the assumption (\ref{vanishep1}) easily imply
the vanishing of the range space of (\ref{reex}) (cf. \cite[Lemma
2.6]{JSZ}). Thus every $f\in \con^{-\infty}_{\chi_{E}}(\oJ(E))$
vanishes on the image of $\rho_x$. As $x$ is arbitrary, we finish
the proof of Proposition \ref{descent11}.
\end{proof}
\vsp
\begin{proof}[Proof of Proposition \ref{descent22}]
Proposition \ref{descent22} is proved in \cite[Proposition 7.1]{SZ}. We sketch a proof for completeness.   Let $x$ be a semisimple element in $\u(E)\setminus\z(E)$. Recall the action of $\breve{\oU}(E)$ on $\u(E)\times E$ (and similarly $\breve{\oU}(E_x)$ on $\u(E_x)\times E_x$) from \eqref{actioninf}. For any $y\in \u(E_x)$, denote by $J'(y)$  the determinant of the $\rk$-linear map
\[
   [y, \cdot]: \u(E)/\u(E_x)\rightarrow \u(E)/\u(E_x).
\]
Then $J'$ is a $\breve{\oU}(E_x)$-invariant function on $\u(E_x)$. Put
\[
  \u(E_x)^\circ:=\{y\in \u(E_x)\mid J'(y)\neq 0\}.
\]
It contains $x+\CN_{E_x}$.  The map
\[
 \begin{array}{rcl}
    \rho_x': \breve{\oU}(E)\times (\u(E_x)^\circ\times E_x)&\rightarrow& \u(E)\times E,\\
           (\breve g,y,v) &\mapsto &\breve g.(y,v)
   \end{array}
\]
is a submersion, and we finish the proof as that of Proposition \ref{descent11}.
\end{proof}

\section{Proof of Proposition \ref{linear}}\label{lin}

This section is devoted to a proof of Proposition \ref{linear}. So assume throughout this section that
\begin{equation}\label{vinf}
  \con^{-\infty}_{\chi_{E^\circ}}(\u(E^\circ)\times E^\circ)=0
\end{equation}
for all commutative involutive algebras $A^\circ$ and all
$\epsilon$-Hermitian $A^\circ$-modules $E^\circ$.
We are aimed to show that $\con^{-\infty}_{\chi_{E}}(\oJ(E))=0$.

The Lie algebra of $\oH(E)$ is
\[
  \h(E):=E\times A^{\tau=-\epsilon}
\]
with Lie bracket given by
\[
   [(u,t),(u',t')]:=(0,\la u,u'\ra_E-\la u',u\ra_E).
\]
The Lie algebra of $\breve{\oJ}(E)$ is
\[
  \j(E):=\u(E)\ltimes \h(E),
\]
where the semidirect product is defined by the Lie algebra action
\[
      x.(u,t):=(xu, 0).
\]

Let $\breve{\oU}(E)$ act on $\j(E)$ by the differential of its
action on $\oJ(E)$, i.e.,
\[
   \breve{g}.j:=\chi_E(\breve{g})\, \Ad_{\breve g}(j),\quad j\in \j(E).
\]
It is easy to see that as a $\breve{\oU}(E)$-space,
\[
  \j(E)=\u(E)\times E\times A^{\tau=-\epsilon},
\]
where $A^{\tau=-\epsilon}$ carries the trivial
$\breve{\oU}(E)$-action. Therefore the assumption \eqref{vinf} implies that
\begin{equation}\label{vinfj}
  \con^{-\infty}_{\chi_{E}}(\j(E))=0.
\end{equation}

We need the following obvious fact of exponential maps in the theory
of linear algebraic groups.
\begin{lemp}\label{expn}
The set of unipotent elements in  $\oJ(E)$ is $\CU_E\ltimes \oH(E)$,
the set of algebraically nilpotent elements in $\j(E)$ is
$\CN_E\ltimes \h(E)$, and the exponential map is a
$\breve{\oU}(E)$-intertwining homeomorphism from $\CN_E\ltimes
\h(E)$ onto $\CU_E\ltimes \oH(E)$.
\end{lemp}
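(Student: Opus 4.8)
The plan is to read off the first two assertions directly from the explicit matrix models of $\oJ(E)$ and $\j(E)$, and to deduce the third from the standard correspondence, valid over a field of characteristic zero, between the nilpotent cone of the Lie algebra of a linear algebraic group and its unipotent variety. None of the steps is hard; the one point that deserves a moment's care is that the automorphisms of $\oJ(E)$ and $\j(E)$ induced by the $\tau$-conjugate-linear elements of $\breve{\oU}(E)$ are still $\rk$-rational, so that naturality of the exponential map may be applied.

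For the first two assertions, realize $\oJ(E)$ inside $\GL_\rk(A\oplus E\oplus A)$ through the matrices $j(x,u,t)$ displayed in Section~\ref{sec:intro}. Such a matrix is block upper triangular with diagonal blocks $1$, $x$, $1$, so its characteristic polynomial over $\rk$ equals $(\lambda-1)^{2\dim_\rk A}$ times the characteristic polynomial of $x$; hence $j(x,u,t)$ is a unipotent operator on $A\oplus E\oplus A$ exactly when $x$ is unipotent on $E$, that is, exactly when $x\in\CU_E$, with no constraint on $u$ or $t$. Since an element of the algebraic group $\oJ(E)$ is unipotent if and only if it acts as a unipotent operator in this faithful representation, the set of unipotent elements of $\oJ(E)$ is $\CU_E\ltimes\oH(E)$. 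Differentiating the matrix $j$ along $(X,u,t)$ at the identity exhibits $\j(E)$ inside $\gl_\rk(A\oplus E\oplus A)$ as the matrices that are block upper triangular with diagonal blocks $0$, $X$, $0$, for $X\in\u(E)$; such a matrix is nilpotent if and only if $X$ is, so the set of algebraically nilpotent elements of $\j(E)$ is $\CN_E\ltimes\h(E)$.

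For the third assertion, note that $\oJ(E)$ is a linear algebraic group over $\rk$ with Lie algebra $\j(E)$. By standard facts about linear algebraic groups in characteristic zero — the Zariski closure of the cyclic group generated by a unipotent element is a one-parameter unipotent subgroup lying inside $\oJ(E)$, and dually $t\mapsto\exp(tN)$ is a morphism $\mathbb{G}_a\to\oJ(E)$ for every nilpotent $N\in\j(E)$ — the exponential series $\exp(N)=1+N+\tfrac12N^2+\cdots$ and the logarithm series $\log(g)=(g-1)-\tfrac12(g-1)^2+\cdots$ are mutually inverse bijections between the set of nilpotent elements of $\j(E)$ and the set of unipotent elements of $\oJ(E)$. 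On these loci both series terminate, since we work inside the finite-dimensional $\rk$-vector space $A\oplus E\oplus A$ and the relevant operators are nilpotent; thus both maps are polynomial, hence continuous, and together with the first two assertions this yields a homeomorphism $\exp\colon\CN_E\ltimes\h(E)\to\CU_E\ltimes\oH(E)$. (One may alternatively avoid the algebraic-groups input and verify by hand, from the block-triangular form, that $\log j(x,u,t)$ lies in $\j(E)$ and is nilpotent whenever $x\in\CU_E$, and conversely that $\exp$ carries $\CN_E\ltimes\h(E)$ into $\CU_E\ltimes\oH(E)$.)

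It remains to check $\breve{\oU}(E)$-equivariance. Each $\breve{g}\in\breve{\oU}(E)$ acts $\rk$-linearly on $E$ — it is $A$-linear when $\chi_E(\breve g)=1$ and $\tau$-conjugate linear when $\chi_E(\breve g)=-1$, and in either case $\rk$-linear because $A$ is a $\rk$-algebra — so conjugation by $\breve{g}$ is an ordinary $\rk$-linear conjugation in the representation above and therefore commutes with the exponential and logarithm series. If $\chi_E(\breve g)=1$, the action \eqref{actg} on $\oJ(E)$ is conjugation by $\breve{g}$ and the corresponding action on $\j(E)$ is $\Ad_{\breve g}$, and $\exp$ intertwines the two. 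If $\chi_E(\breve g)=-1$, then \eqref{actg} sends $j$ to $\breve{g}\,j^{-1}\breve{g}^{-1}$ while the action on $\j(E)$ sends $j$ to $-\Ad_{\breve g}(j)$, and
\[
  \exp\!\bigl(-\Ad_{\breve g}(j)\bigr)=\exp\!\bigl(\Ad_{\breve g}(-j)\bigr)=\breve{g}\,\exp(-j)\,\breve{g}^{-1}=\breve{g}\,(\exp j)^{-1}\,\breve{g}^{-1},
\]
which is precisely $\breve{g}$ acting on $\exp(j)$ under \eqref{actg}. This establishes the intertwining property and completes the proof.
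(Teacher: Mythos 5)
The paper gives no argument for this lemma at all: it is introduced with the words ``the following obvious fact of exponential maps in the theory of linear algebraic groups'' and used directly, so your proposal is supplying a proof the author chose to omit rather than paralleling one. Your write-up is correct in substance and follows the route the author plainly has in mind: the block-triangular realization $j(x,u,t)$ identifies the unipotent set of $\oJ(E)$ with $\CU_E\ltimes\oH(E)$ and the nilpotent elements of $\j(E)$ with $\CN_E\ltimes\h(E)$ via characteristic polynomials, the terminating $\exp$/$\log$ series give mutually inverse polynomial (hence continuous) bijections between these loci by the standard characteristic-zero facts you cite, and the sign computation $\exp(-\Ad_{\breve g}(j))=\breve g\,(\exp j)^{-1}\breve g^{-1}$ correctly handles the case $\chi_E(\breve g)=-1$ of the actions \eqref{actg} and its infinitesimal analogue.

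The one step you state too quickly is the assertion that ``conjugation by $\breve g$ is an ordinary $\rk$-linear conjugation in the representation above.'' For $\breve g=(g,-1)$ the action on $\oJ(E)$ is a priori conjugation inside the abstract group $\breve\oJ(E)$, not inside $\GL_\rk(A\oplus E\oplus A)$, and the naive extension $\diag(1,g,1)$ of $g$ to $A\oplus E\oplus A$ does not realize it: conjugating $j(x,u,t)$ by $\diag(1,g,1)$ produces $\tau$-conjugate-linear off-diagonal blocks and does not produce the sign $\delta t$ required by \eqref{gtaction}. To make your sentence literally true, extend $\breve g$ instead to the $\rk$-linear map $(a,v,b)\mapsto(\epsilon a^\tau,\,gv,\,b^\tau)$ of $A\oplus E\oplus A$ (this is an element of $\breve{\oU}(E\oplus A^2)$ lying over $(g,-1)$), and check that conjugation by it carries $j(x,u,t)$ to $j(gxg^{-1},gu,-t)$, i.e.\ induces exactly the conjugation action of $\breve g$ on $\oJ(E)$ with differential $\Ad_{\breve g}$ on $\j(E)$; alternatively, avoid any matrix realization of $\breve g$ by observing that $j\mapsto\breve g\,j\,\breve g^{-1}$ is an $\rk$-algebraic automorphism of the group $\oJ(E)$ and invoking naturality of $\exp$ on nilpotent and unipotent loci. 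Either patch is a one-line addition, so this is a gap in justification rather than in the idea, and with it your proof is complete.
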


In all cases that concern us, whenever $M$ is a locally analytic
$\rk$-manifold with a locally analytic $\breve{\oU}(E)$-action,
there is always a canonical choice (up to a scalar) of a positive
smooth invariant measure on $M$. Therefore the space
$\con^{-\infty}_{\chi_{E}}(M)$ is canonically identified with
$\oD^{-\infty}_{\chi_{E}}(M)$. We will use this observation freely.

\begin{lemp}\label{loc222} One has that $\oD^{-\infty}_{\chi_E}(\CU_E\ltimes \oH(E))=0$.
\end{lemp}

\begin{proof}
By \eqref{vinfj}, we have that
\[
\oD^{-\infty}_{\chi_E}(\j(E))=0,
\]
which implies that
\[
   \oD^{-\infty}_{\chi_E}(\CN_E\ltimes \h(E))=0.
 \]
The lemma then follows from Lemma \ref{expn}.
\end{proof}

Recall the following localization principle
which is due to Bernstein. See \cite[section 1.4]{Be84} or
\cite[Corollary 2.1]{AGRS}.

\begin{lemp}\label{localization}
Let $\varphi: M\rightarrow N$ be a continuous map of t.d. spaces,
and let $G$ be a group acting continuously on $M$ preserving the
fibers of $\varphi$.  Then for any group homomorphism
$\chi_G:G\rightarrow \C^\times $, the condition
\[
   \oD_{\chi_G}^{-\infty}(\varphi^{-1}(x))=0 \quad \textrm{for all }x\in N
\]
implies that
\[
  \oD_{\chi_G}^{-\infty}(M)=0.
\]
\end{lemp}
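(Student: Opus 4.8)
The statement is Bernstein's localization principle, and the plan is to obtain it as in \cite[\S1.4]{Be84} and \cite[Corollary~2.1]{AGRS}: I would set up the standard framework for distributions on t.d.\ spaces, dispose of the easy implication and the routine reductions directly, and invoke Bernstein's argument for the remaining, genuinely non-elementary, step rather than reproduce it. The easy half to bear in mind is the converse: a nonzero element of $\oD^{-\infty}_{\chi_G}(\varphi^{-1}(x))$, pushed forward along the closed embedding $\varphi^{-1}(x)\hookrightarrow M$, is a nonzero element of $\oD^{-\infty}_{\chi_G}(M)$ supported on that fibre, so the content of the lemma is that \emph{every} nonzero equivariant distribution is seen by some fibre.

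First I would reduce to the case that $N$ is compact: being Hausdorff, locally compact and second countable, $N$ is $\sigma$-compact and has a basis of compact open sets, hence is an increasing union $N=\bigcup_k V_k$ of compact open subsets; since every $f\in\con^\infty_0(M)$ is supported in some $\varphi^{-1}(V_k)$, a distribution on $M$ vanishes once all of its restrictions to the $\varphi^{-1}(V_k)$ vanish, and one may replace $M,N$ by $\varphi^{-1}(V_k),V_k$. Next I would record the two structural inputs. First, $\oD^{-\infty}(M)$ is a module over the ring $R$ of $\C$-valued locally constant functions on $N$, with $h\in R$ acting by multiplication by $\varphi^{*}h$; since $\varphi$ is $G$-invariant (this is the fibre-preservation hypothesis), $\varphi^{*}h\circ T_g^{-1}=\varphi^{*}h$ and hence $(T_g)_{*}\bigl((\varphi^{*}h)\,\omega\bigr)=(\varphi^{*}h)\,(T_g)_{*}\omega$, so the $R$-action commutes with the $G$-action, $\oD^{-\infty}_{\chi_G}(M)$ is an $R$-submodule, and for a compact open $V\subseteq N$ the idempotent $\mathbf 1_V\in R$ acts as the $G$-equivariant projection onto the part of a distribution supported over $V$. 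Second, for $x\in N$ and a compact open $V\ni x$ there is an exact sequence
\[
   0\longrightarrow\oD^{-\infty}_{\chi_G}(\varphi^{-1}(x))\longrightarrow\oD^{-\infty}_{\chi_G}(\varphi^{-1}(V))\longrightarrow\oD^{-\infty}_{\chi_G}\bigl(\varphi^{-1}(V)\setminus\varphi^{-1}(x)\bigr),
\]
with first map pushforward along the closed fibre and second map restriction; thus the hypothesis $\oD^{-\infty}_{\chi_G}(\varphi^{-1}(x))=0$ says precisely that an equivariant distribution over $V$ is detected by its restriction away from $\varphi^{-1}(x)$.

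With these in hand, suppose $\xi\in\oD^{-\infty}_{\chi_G}(M)$ is nonzero; its support $S$ is a nonempty closed $G$-stable set, and $\overline{\varphi(S)}$ is the support of $\xi$ regarded as a section of the pushed-forward sheaf of distributions on $N$. If some $z\in\overline{\varphi(S)}$ were isolated in $\overline{\varphi(S)}$, then for a compact open $V\ni z$ with $V\cap\overline{\varphi(S)}=\{z\}$ one would have $\operatorname{supp}(\mathbf 1_V\xi)\subseteq\varphi^{-1}(z)$, hence $\mathbf 1_V\xi\in\oD^{-\infty}_{\chi_G}(\varphi^{-1}(z))=0$, contradicting $z\in\overline{\varphi(S)}$; so the hypothesis forces $\overline{\varphi(S)}$ to be perfect. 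The crux of Bernstein's argument is to rule this out in general: one bisects $\overline{\varphi(S)}$ repeatedly along clopen subsets of $N$ and, combining the exact sequences above with a limiting argument over a nested sequence of compact opens shrinking to a point, reduces once more to a single fibre. I expect this last step — handling the perfect-set case, i.e.\ Bernstein's principle for arbitrary continuous maps of t.d.\ spaces — to be the main obstacle, and I would take it from \cite[\S1.4]{Be84} and \cite[\S2]{AGRS}; everything else above is routine once those ingredients are assembled.
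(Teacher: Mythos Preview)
Your proposal is correct and aligns with the paper's treatment: the paper gives no proof of this lemma at all, simply citing \cite[\S1.4]{Be84} and \cite[Corollary~2.1]{AGRS}, which are exactly the references you invoke for the non-elementary core step. Your additional setup (reduction to compact $N$, the $R$-module structure, the isolated-point argument) is accurate and helpful context, but goes beyond what the paper itself supplies.
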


We use Lemma \ref{loc222} and the localization principle to prove the following

\begin{lemp}\label{loc2} One has that $\oD^{-\infty}_{\chi_E}(\oZ(E)\CU_E\ltimes \oH(E))=0$.
 \end{lemp}

\begin{proof}Note that $z\CU_E\ltimes \oH(E)$ is
$\breve{\oU}(E)$-stable for every $z\in \oZ(E)$, and that (by using the trace map) the map
\[
   \begin{array}{rcl}
           \oZ(E)\CU_E\ltimes \oH(E)&\rightarrow & \oZ(E),\\
           (z x, h)&\mapsto& z,
   \end{array}  \qquad (x\in \CU_E)
\]
is a well defined continuous map. By the localization principle, it
suffices to show that
\begin{equation}\label{van22}
   \oD^{-\infty}_{\chi_E}(z\CU_E\ltimes \oH(E))=0,\quad \textrm{for all }z\in\oZ(E).
 \end{equation}
Given $z\in \oZ(E)$, denote by $T_z$ the left multiplication by $z$.
One easily checks that the diagram
\[
   \begin{CD}
      \CU_E\ltimes \oH(E)        @>T_z>>     z\CU_E\ltimes \oH(E) \\
      @V g VV                 @V g VV\\
     \CU_E\ltimes \oH(E)        @>T_z>>     z\CU_E\ltimes \oH(E) \\
   \end{CD}
\]
commutes for all $g\in \oU(E)$, and the diagram
\[
   \begin{CD}
      \CU_E\ltimes \oH(E)        @>T_z>>     z\CU_E\ltimes \oH(E) \\
      @V \breve{g}z VV                 @V \breve{g} VV\\
     \CU_E\ltimes \oH(E)        @>T_z>>     z\CU_E\ltimes \oH(E) \\
   \end{CD}
\]
commutes for all $\breve{g}\in \breve{\oU}(E)\setminus \oU(E)$,
where all vertical arrows are given by the actions of the indicated
elements. Therefore (\ref{van22}) is a consequence of Lemma
\ref{loc222}.
\end{proof}

We now prove Proposition \ref{linear} by induction on $\sdim(E)$. Assume that we have proven the proposition when $\sdim(E)$ is smaller. Then
Proposition \ref{descent11} implies that every $T\in
\oD^{-\infty}_{\chi_{E}}(\oJ(E))$ is supported in
$(\oZ(E)\CU_E)\ltimes \oH(E)$, and then $T=0$ by Lemma \ref{loc2}.
This finishes the proof.

\section{Proof of Proposition \ref{indn}: non-distinguished orbits}\label{rnd}
View $\u(E)$ as a quadratic space over $\rk$ under the trace form
\begin{equation}\label{quadratic}
  \la x,y\ra_{\u(E)}:=\tr_{A/\rk}(\tr(xy)).
\end{equation}
Then we have an orthogonal decomposition
\[
  \u(E)=\z(E)\oplus \su(E),
\]
where $\su(E)$ is the space of trace free elements in $\u(E)$. Recall that $\CO_i=\CN_i\setminus \CN_{i+1}$ is a nilpotent $\breve \oU(E)$-orbit. It is clearly contained in $\su(E)$.

\begin{lemp}\label{metricp}(\cite[Lemma 6.1]{SZ})
If $\CO_i$ is non-distinguished and $o\in \CO_i$, then there is a non-isotropic
vector in $\su(E)$ which is perpendicular to the tangent space
$\oT_o(\CO_i)\subset \su(E)$.
\end{lemp}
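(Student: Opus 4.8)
The plan is to exploit the Jacobson--Morozov theorem together with the semisimple centralizer of $\mathbf{e}$. Since $\CO_i$ is not distinguished, by definition $\mathbf{e}$ commutes with some nonzero semisimple element $s\in\u(E)$. First I would complete $\mathbf{e}$ to an $\sl_2$-triple $(\mathbf{e},\h,\mathbf{f})$ inside $\u(E)$, which is possible because $\mathbf{e}$ is nilpotent; then a standard computation identifies the tangent space to the orbit at $\mathbf{e}$ with $\oT_{\mathbf{e}}(\CO_i)=[\u(E),\mathbf{e}]$, and $\sl_2$-representation theory gives the orthogonal complement (with respect to the trace form $\la\,,\,\ra_{\u(E)}$) as the centralizer $\z_{\u(E)}(\mathbf{f})$. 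So the task reduces to producing a non-isotropic element of $\z_{\u(E)}(\mathbf{f})$.

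Next I would bring in the semisimple element. One can arrange the $\sl_2$-triple so that $s$ centralizes all of $(\mathbf{e},\h,\mathbf{f})$: work inside the reductive subalgebra $\z_{\u(E)}(s)$, which is again of the same type (a direct sum of Lie algebras of smaller classical groups, since $s$ semisimple means $E$ decomposes into eigenspaces/isotypic pieces that are themselves $\epsilon$-Hermitian or dual pairs), and complete $\mathbf{e}$ there. Then $s\in\z_{\u(E)}(\mathbf{f})$, so it suffices to check that some element of $\z_{\u(E)}(s)$ is non-isotropic for the trace form — equivalently that the trace form does not vanish identically on $\z_{\u(E)}(s)$. But the trace form is non-degenerate on $\u(E)$ and its restriction to the reductive subalgebra $\z_{\u(E)}(s)$ is again non-degenerate (the decomposition of $E$ under $s$ is orthogonal in the relevant sense, so $\u(E)=\z_{\u(E)}(s)\oplus\z_{\u(E)}(s)^{\perp}$), hence in particular not identically zero on $\z_{\u(E)}(s)$. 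This yields a non-isotropic vector perpendicular to $\oT_{\mathbf{e}}(\CO_i)$, as wanted.

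Alternatively, and perhaps more cleanly for a symplectic-group writeup, I would avoid the orthogonal-complement computation and argue directly: the claim is only that \emph{some} non-isotropic vector lies in $\oT_{\mathbf{e}}(\CO_i)^{\perp}$. Since $s$ is semisimple and nonzero, and since $[\mathbf{e},s]=0$ while also $[\h,s]=0$, $[\mathbf{f},s]=0$ after the adjustment above, $s$ is $\ad\mathbf{e}$-fixed hence lies in $\ker(\ad\mathbf{e})^{\perp\perp}$; one checks $\la s,[\u(E),\mathbf{e}]\ra_{\u(E)}=\la[\mathbf{e},s],\u(E)\ra_{\u(E)}=0$ by invariance of the trace form, so $s\perp\oT_{\mathbf{e}}(\CO_i)$ automatically. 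It then remains only to show $s$ can be taken non-isotropic, i.e. $\tr(s^2)\neq 0$: if the given $s$ is isotropic, replace it — inside $\z_{\u(E)}(s)$, which strictly contains $\z(E)$ since $\CO_i$ is not distinguished, the trace form is non-degenerate, so some semisimple element there is non-isotropic and still centralizes $\mathbf{e}$. The main obstacle is the bookkeeping in this last step: verifying that the trace form stays non-degenerate on the centralizer subalgebra and that one can always land on a \emph{non-isotropic semisimple} centralizing element in the symplectic case (where $A=\rk$, $\tau$ trivial, $\epsilon=-1$); this is where the cited \cite[Lemma 4.2]{SZ} does the real work, and I would either invoke it directly or reproduce its short argument.
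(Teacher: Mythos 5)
The paper itself offers no proof of this lemma; it is quoted as an elementary fact with a pointer to \cite[Lemma 4.2]{SZ}. Your attempt to supply an argument is therefore welcome, but it has a genuine gap exactly at the point where that citation does the real work. First, a slip: the orthogonal complement of $\oT_{\mathbf e}(\CO_i)=[\u(E),\mathbf e]$ with respect to the invariant trace form is the centralizer $\Cent_{\u(E)}(\mathbf e)$, not $\Cent_{\u(E)}(\mathbf f)$; the decomposition $\u(E)=[\u(E),\mathbf e]\oplus\Cent_{\u(E)}(\mathbf f)$ is not orthogonal (already in $\sl_2$ one has $\tr(\mathbf e\mathbf f)\neq 0$). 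This slip is harmless in your second, direct argument, where you correctly deduce $\la s,[\u(E),\mathbf e]\ra_{\u(E)}=0$ from $[\mathbf e,s]=0$. The serious problem is the last step of both versions. In the first you reduce to showing that the trace form is not identically zero on $\Cent_{\u(E)}(s)$; but a non-isotropic element of $\Cent_{\u(E)}(s)$ need not be perpendicular to the tangent space at all --- after your adjustment $\mathbf h$ itself lies in $\Cent_{\u(E)}(s)$, is non-isotropic, and satisfies $\la\mathbf h,[\mathbf f,\mathbf e]\ra_{\u(E)}=-\tr(\mathbf h^2)\neq 0$ with $[\mathbf f,\mathbf e]\in[\u(E),\mathbf e]$. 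In the second, when the given $s$ is isotropic (which really can happen over a p-adic field: nonzero semisimple $s$ with $\tr(s^2)=0$ exist in $\sp_4(\rk)$ when $-1$ is a square), you propose to replace it by ``some semisimple element of $\Cent_{\u(E)}(s)$'' that is non-isotropic ``and still centralizes $\mathbf e$''; but elements of $\Cent_{\u(E)}(s)$ do not centralize $\mathbf e$ in general (again $\mathbf h$, $\mathbf f$ are counterexamples), and non-degeneracy of the form on $\Cent_{\u(E)}(s)$ says nothing about the subspace of elements commuting with $\mathbf e$. You acknowledge this yourself by falling back on \cite[Lemma 4.2]{SZ}, which is precisely the statement to be proved.

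What is actually needed is that the trace form is not identically zero on $\Cent_{\u(E)}(\mathbf e)=[\u(E),\mathbf e]^{\perp}$, and this can be closed as follows. Put $\m:=\Cent_{\u(E)}(\mathbf e)\cap\Cent_{\u(E)}(\mathbf h)\cap\Cent_{\u(E)}(\mathbf f)$, the reductive part of $\Cent_{\u(E)}(\mathbf e)$. The complement of $\m$ in $\Cent_{\u(E)}(\mathbf e)$ consists of elements of positive $\ad\mathbf h$-weight, hence of nilpotent elements, so if $\CO_i$ is not distinguished then $\m\neq 0$ (cf. \cite[Lemma 3.7.3]{CM}). Moreover $\m$ is the space of $\ad(\sl_2)$-invariants in $\u(E)$, and the invariant pairing of the trivial isotypic component with every nontrivial isotypic component vanishes, so the trace form restricted to $\m$ is non-degenerate; in characteristic zero a nonzero space carrying a non-degenerate symmetric form contains a non-isotropic vector, and any such vector lies in $\Cent_{\u(E)}(\mathbf e)$, hence is perpendicular to $\oT_{\mathbf e}(\CO_i)$ as required. (Alternatively one can read this off from the explicit description of centralizers via multiplicity spaces, as in \cite[Theorem 8.2.14]{CM}.) Without an argument of this kind, your proposal does not establish the lemma.
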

\begin{proof}
By definition, $o$ commutes with a nonzero
semisimple element $h\in \su(E)$. Denote by $\a_h$ the center of
$\su(E)^h$ (the centralize of $h$ in $\su(E)$), which is a nonzero non-degenerate subspace of $\su(E)$.

Using the fact that every element of $\a_h$ commutes with $o$, we
see that the tangent space
\[
  \oT_o(\CO_i)=[\u(E),o]=[\su(E), o]
\]
is perpendicular to $\a_h$.

\end{proof}

Recall the action (\ref{actioninf}) of $\breve{\oU}(E)$ on $\u(E)$
and $E$. Write $E':=E$ as an $\epsilon$-Hermitian $A$-module, but
equipped with the action of $\breve{\oU}(E)$ given by
 \[
   (g,\delta).u:=gu.
 \]
Define a
non-degenerate $\breve{\oU}(E)$-invariant bilinear map
\[
   \la\,,\,\ra_\j: (\u(E)\times E)\times (\u(E)\times E')\rightarrow
   \rk
\]
by
\[
   \la (x,u),(x',u')\ra_\j:=\la x, x'\ra_{\u(E)}+\tr_{A/\rk}(\la u,u'\ra_E).
\]
Fix a nontrivial character $\psi$ of $\rk$. As in Appendix C, for
every distribution $T\in \oD^{-\infty}(\u(E)\times E)$, define its
Fourier transform $\widehat{T}\in \con^{-\infty}(\u(E)\times E')$ by
\[
  \widehat{T}(\omega):=T(\hat{\omega}), \quad \omega\in \oD_0^\infty(\u(E)\times
E'),
\]
where $\hat{\omega}\in \con_0^{\infty}(\u(E)\times E)$ is given by
\[
   \hat{\omega}(j):=\int_{\u(E)\times E'} \psi(\la j,j'\ra_\j)\,
   d\omega(j'),\quad j\in \u(E)\times E.
\]

\begin{lemp}\label{funcert}
Assume that $\CO_i$ is non-distinguished. Let $T\in
\oD^{-\infty}(\u(E)\times E)$. If $T$ is supported in $(\z(E)\oplus \CN_i)\times
E$, and its Fourier transform
$\widehat{T}\in\con^{-\infty}(\u(E)\times E')$ is supported in the
null cone
\begin{equation}\label{nulcone}
   \{(z+x,u)\in \u(E)\times E'\mid z\in \z(E),\,x\in \su(E),\,\la x,x\ra_{\u(E)}=0\},
\end{equation}
then $T$ is supported in $(\z(E)\oplus\CN_{i+1})\times E$.
\end{lemp}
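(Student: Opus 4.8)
The plan is to exploit the non-isotropic normal vector supplied by Lemma \ref{metricp} together with a standard ``Weil representation / Fourier transform along a hyperplane'' argument of the kind used by Aizenbud--Gourevitch--Rallis--Schiffmann and by Sun--Zhu. Write $w\in\u(E)$ for a non-isotropic vector perpendicular to $\oT_{\mathbf e}(\CO_i)$, and let $L=\rk w$ be the (non-degenerate) line it spans, so that $\u(E)=L\oplus L^\perp$ as quadratic spaces, the splitting being $\breve{\oU}(E)_{\mathbf e}$-equivariant after we pass to an $\SL_2$-triple containing $\mathbf e$. The point of $w\perp\oT_{\mathbf e}(\CO_i)$ is that a neighbourhood of $\mathbf e$ in $\CN_i$ looks, transversally to $L$, like a product: locally $\CO_i$ is cut out using only $L^\perp$-directions, so being supported on $\CN_i\times E$ near $(\mathbf e,u)$ is a condition that does not involve the $L$-coordinate.

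First I would reduce, by the $\breve{\oU}(E)$-homogeneity of the statement and a partition-of-unity/localization argument (Lemma \ref{localization} applied along $\CO_i$), to proving the conclusion in a small $\breve{\oU}(E)_{\mathbf e}$-invariant neighbourhood of an arbitrary point $(\mathbf e,u_0)$ of $\CO_i\times E$. Next, using the slice $L\oplus L^\perp$ and coordinates adapted to it, I would perform a partial Fourier transform $\mathcal F_L$ only in the $L$-direction (a one-dimensional Fourier transform, which makes sense on distributions since $L$ is a genuine direct summand on which $\breve{\oU}(E)_{\mathbf e}$ acts through $\pm1$ via $\chi_E$). The hypothesis that $\widehat T$ is supported on the cone $\tr(x^2)=0$ translates, after writing $\tr(x^2)=\langle x,x\rangle_{\u(E)} = \langle w\text{-component}\rangle^2+\tr(x_{L^\perp}^2)$, into the statement that $\mathcal F_L \widehat T$ — which is, up to the fixed nontrivial $\psi$-normalisation, $T$ itself transported through the slice — is supported where the $L$-coordinate squared equals $-\tr(x_{L^\perp}^2)$. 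Meanwhile the hypothesis $\supp T\subset \CN_i\times E$ says $T$ is supported where the $L$-coordinate is free but $x_{L^\perp}$ lies in the transverse trace of $\CN_i$; intersecting these two closed conditions and using $w\perp\oT_{\mathbf e}(\CO_i)$ forces, to leading order at $\mathbf e$, the $L$-coordinate to vanish on the support, hence $\supp T\subset\CN_{i+1}\times E$ near $(\mathbf e,u_0)$.

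The main obstacle, and the step that needs genuine care rather than bookkeeping, is making the ``partial Fourier transform in the $L$-direction commutes with the constraints'' argument rigorous: one must check that $L$ is preserved (not merely $\u(E)=L\oplus L^\perp$ abstractly, but compatibly with the $\breve{\oU}(E)_{\mathbf e}$-action and with the quadratic form, so that $\mathcal F_L$ intertwines the $\chi_E$-equivariance correctly), that the relevant distributions extend across the slice, and that the two support conditions really do cut down as claimed — this is exactly where Lemma \ref{metricp} is used, and where one invokes the homogeneity-degree estimates for distributions supported on a cone (Bernstein's and the appendix's estimates on homogeneous distributions) to upgrade ``to leading order'' to an honest statement about the support. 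I expect the bulk of the remaining work to be the standard homogeneity/descent packaging familiar from \cite{AGRS} and \cite{SZ}, and the only subtle input to be the interplay of the two cone conditions through the perpendicularity of $w$.
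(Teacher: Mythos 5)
Your geometric input is the right one (Lemma \ref{metricp}), but the analytic engine that actually proves the lemma is missing from your outline. In the paper the proof is a one-line application of Theorem \ref{uncert} of the appendix: take $X=\CN_i\times E$, $f(x,u)=\tr(x^2)$ (homogeneous of degree $2$), and note that every point of $\CO_i\times E$ is a regular point of $X$ whose conormal space contains $(w,0)$ with $w$ non-isotropic, so $f$ is nonzero somewhere on $\operatorname{N}^*_{(\mathbf e,u)}(X)$; the uncertainty theorem then kills $T$ near every point of $\CO_i\times E$, i.e.\ forces $\operatorname{supp}T\subset\CN_{i+1}\times E$. That theorem is itself proved by a wave-front-set argument: if $\widehat T$ is supported in the zero locus of $f$ then $\mathrm{WF}_x(T)$ lies in the zero locus of the top-degree part $f_d$, and if $T$ is supported in $X$ with $x$ regular then $\mathrm{WF}_x(T)$ is invariant under translations by $\operatorname{N}^*_x(X)$; non-vanishing of $T$ near $x$ would put the whole conormal space inside the zero locus of $f_d$, contradicting the non-isotropic vector. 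Nothing in your plan supplies a substitute for this propagation mechanism.

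Concretely, the step where your argument breaks is the claim that the hypothesis on $\widehat T$ ``translates, after a partial Fourier transform $\mathcal F_L$ in the $L=\rk w$ direction, into the statement that $\mathcal F_L\widehat T$ is supported where the $L$-coordinate squared equals $-\tr(x_{L^\perp}^2)$,'' followed by ``intersecting the two closed conditions.'' A one-variable Fourier transform does not carry a support condition on a quadric to the same condition in the dual variable, and $\mathcal F_L\widehat T$ is not ``$T$ transported through the slice''; the two hypotheses live on $T$ and on $\widehat T$ respectively and cannot be intersected as subsets of one space --- overcoming exactly this is the entire content of the lemma. Moreover, the statement that near $\mathbf e$ membership in $\CN_i\times E$ ``does not involve the $L$-coordinate'' is unjustified: $w\perp\oT_{\mathbf e}(\CO_i)$ is a pointwise tangent-space condition (to first order it makes the $L$-coordinate constant along $\CO_i$, not free), and no local product structure of $\CN_i$ is available. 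Finally, the promised upgrade of ``to leading order'' via equivariance of the splitting under the stabilizer of $\mathbf e$ and ``homogeneity-degree estimates'' has no foundation here: the lemma assumes no equivariance at all on $T$ (it is an arbitrary distribution, in particular not homogeneous), the splitting $\u(E)=L\oplus L^{\perp}$ need not be stabilizer-invariant, and the appendix contains no homogeneity estimates --- its wave-front-set uncertainty theorem is precisely the ingredient your proof would have to reconstruct.
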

\begin{proof}
This is a direct consequence of Lemma \ref{metricp} and Theorem
\ref{uncert} of Appendix C.
\end{proof}

\begin{proof}[Proof of Proposition \ref{indn} for non-distinguished $\CO_i$]
Let $T\in \oD_{\chi_E}^{-\infty}(\u(E)\times E)$. Then by the assumption of  Proposition \ref{indn}
it is supported in $(\z(E)\oplus \CN_i)\times
E$.  It is clear that the Fourier
transform maps $\oD_{\chi_E}^{-\infty}(\u(E)\times E)$ into
$\con^{-\infty}_{\chi_E}(\u(E)\times E')$. By noting that $-1\in
\oU(E)$, we find that the space $\con^{-\infty}_{\chi_E}(\u(E)\times
E')$ is identical to the space
$\con^{-\infty}_{\chi_E}(\u(E)\times E)$. Apply the assumption to
$\widehat{T}$, we find that $\widehat{T}$ is supported in
$(\z(E)\oplus \CN_i)\times
E'$, which is contained in the null cone
(\ref{nulcone}). By Lemma \ref{funcert}, this proves Proposition \ref{indn} in the case that $\CO_i$ is non-distinguished.
\end{proof}

\section{Proof of Proposition \ref{indn}: distinguished orbits}\label{rd}

As explained in Section \ref{pd}, when $\CO_i$ is distinguished, we prove Proposition \ref{indn} only in the symplectic case. So assume that $\epsilon=-1$, $A$ is a field,
and  $\tau$ is trivial. Then $\u(E)$ is a symplectic Lie algebra and $\z(E)=0$. For simplicity of notation and without loss of generality, we further assume that $A=\rk$.

For all
$v\in E$, put
\[
  \phi_v(u):=\la u,v\ra_E \,v,\quad u\in E.
\]
One easily checks that $\phi_v\in \u(E)$. For all $o\in \CO_i$, put
\[
  E(o):=\{v\in E\mid \phi_v\in [\u(E),o]\}.
\]

\begin{lemp}\label{support} If every distribution in $\oD^{-\infty}_{\chi_E}(\u(E)\times
E)$ is supported in $\CN_i\times E$, then the support of every
distribution in $\oD^{-\infty}_{\chi_E}(\u(E)\times E)$ is contained
in
\[
  (\CN_{i+1}\times E) \cup \bigsqcup_{\mathbf{o}\in \CO_i}
  \{o\}\times  E(o).
\]
\end{lemp}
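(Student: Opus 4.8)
The plan is to apply Bernstein's localization principle (Lemma~\ref{localization}) along the $\breve{\oU}(E)$-invariant map $\u(E)\times E\to \u(E)$, projection to the first coordinate, combined with the standing hypothesis that every distribution in $\oD^{-\infty}_{\chi_E}(\u(E)\times E)$ is supported in $\CN_i\times E$. Since the support is already inside $\CN_i\times E=\bigl((\CN_{i+1}\cup\CO_i)\times E\bigr)$, it suffices to analyze the contribution over the single orbit $\CO_i$, and there the projection map has fibers of the form $\{o\}\times E$ for $o\in\CO_i$. Thus the task reduces to showing: for each $o\in\CO_i$, any distribution on the fiber $\{o\}\times E$ obtained by restricting an element of $\oD^{-\infty}_{\chi_E}(\u(E)\times E)$ is supported in $E(o)$. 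Equivalently, one must show that such a distribution is annihilated by the ideal of functions vanishing on $E(o)$, or better, produce enough vector fields / group-action relations forcing the support into $E(o)$.

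The mechanism for this is the standard ``Frobenius descent plus nilpotency in the normal direction'' argument from the Jacquet--Rallis / AGRS circle of ideas. Fix $o\in\CO_i$; its stabilizer $\breve{\oU}(E)_o$ (really the centralizer $\oU(E)_o$, up to the extended piece) acts on the fiber $\{o\}\times E\cong E$, and the equivariance property $f(g.x)=\chi_E(g)f(x)$ descends to a corresponding equivariance of the fiberwise distribution under $\oU(E)_o$. The key local computation is the following: near a point $(o,v)$ with $v\notin E(o)$, I want to exhibit a one-parameter family of elements of $\u(E)$ whose infinitesimal action moves $o$ along a direction \emph{transverse} to $\CO_i$, i.e.\ a direction in which the distribution is already known (by the support hypothesis in $\CN_i\times E$, using that $\CN_i$ is closed and $\CO_i$ is relatively open in it) to vanish to all orders. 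Concretely, for $x\in\u(E)$ the diagonal action gives the vector field $(x,u)\mapsto([x,o], xu)$ at the point $(o,u)$; the condition $v\in E(o)$, namely $\phi_v\in[\u(E),o]$, is exactly the condition that the ``extra'' vector field $(o,u)\mapsto(0,\phi_v\cdot\text{something})$ coming from $\phi_v\in\u(E)$ fails to produce a genuinely new transverse direction. One then runs the classical argument: a distribution supported on $\CN_i\times E$ but living over $\CO_i$, when pushed through the transverse slice, must vanish away from the locus where these transverse directions collapse, which is precisely $\bigsqcup_o\{o\}\times E(o)$.

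Let me lay out the steps in order. First, split $\u(E)\times E$ into the open set $U_i:=(\u(E)\setminus\CN_{i+1})\times E$ and the closed set $\CN_{i+1}\times E$; since the support is in $\CN_i\times E$ by hypothesis, it is enough to bound the support of the restriction to $U_i$, where it lies in $\CO_i\times E$ with $\CO_i$ now \emph{closed} in the relevant base $\u(E)\setminus\CN_{i+1}$ and equal to a single orbit. Second, apply Lemma~\ref{localization} to the projection $\CO_i\times E\to\CO_i$: this reduces the claim to the fiber over the fixed point $\mathbf{e}$, i.e.\ to showing that every distribution in $\oD^{-\infty}_{\chi_E'}(\{\mathbf{e}\}\times E)$ arising this way is supported in $E(\mathbf{e})$, where $\chi_E'$ is the appropriate character of the stabilizer; by $\breve{\oU}(E)$-homogeneity of $\CO_i$ this handles all $o\in\CO_i$ uniformly (this is where Frobenius descent is invoked, as in \cite[Lemma 2.5]{JSZ} used earlier). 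Third --- the heart of the matter --- use the equivariance under the (connected) stabilizer subgroup together with the embedding $v\mapsto\phi_v$ to show the fiberwise distribution is killed outside $E(\mathbf{e})$: here one differentiates the action, observes that for $v\notin E(\mathbf{e})$ the vector $\phi_v$ supplies a direction transverse to $\oT_{\mathbf{e}}\CO_i$ inside $\u(E)$, and then a distribution transverse-invariant in a direction leaving the closed orbit must be supported where that transversality degenerates. I expect the main obstacle to be exactly this third step: making precise, in the $p$-adic generalized-function setting, the passage from ``infinitesimally invariant in a direction transverse to the orbit'' to ``supported on the degeneracy locus''. The clean way is to reduce it, via the $\u(E)$-action and a suitable Fourier transform in the $E$-variable (mirroring Lemma~\ref{funcert} and the uncertainty principle of the appendix), to an uncertainty-type statement forcing the support into the common zero locus of the functions $v\mapsto\langle\phi_v,\cdot\rangle$ pairing against the normal space to $\oT_{\mathbf{e}}\CO_i$ --- and that zero locus is precisely $E(\mathbf{e})$. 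The bookkeeping of characters ($\chi_E$ versus its restriction to the stabilizer, and the factor contributed by the Jacobian $J$ as in Section~2) will need care but is routine; the genuinely new geometric input is the identification of $E(o)$ with the degeneracy locus of the transverse directions, which is the content of the definition of $E(o)$ and should follow by linear algebra in $\u(E)$.
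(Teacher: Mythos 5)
There is a genuine gap: your plan never produces the one idea that actually proves this lemma, and the reductions you make first destroy the hypothesis you would need. The paper's proof is a two-line application of the AGRS shear: for $t\in\rk$ the map $\eta_t\colon(x,u)\mapsto(x+t\phi_u,u)$ is a $\breve{\oU}(E)$-intertwining homeomorphism of $\u(E)\times E$, so it preserves the space $\oD^{-\infty}_{\chi_E}(\u(E)\times E)$; hence if $(o,v)$ lies in the support of $T$, then $(o+t\phi_v,v)$ lies in the support of $(\eta_t)_*T$, which by the standing hypothesis is contained in $\CN_i\times E$, so $o+t\phi_v\in\CN_i$ for all $t$; since $\CO_i$ is open in $\CN_i$, for small $t$ this line stays in $\CO_i$, whence $\phi_v\in\oT_o(\CO_i)=[\u(E),o]$, i.e.\ $v\in E(o)$. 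The crucial point is that the hypothesis ``support $\subset\CN_i\times E$'' is exploited through a transformation that mixes the $\u(E)$-coordinate with the $E$-coordinate. Your proposal instead localizes over $\CO_i$ and performs Frobenius descent to the single fiber $\{\mathbf e\}\times E$ \emph{first}; but on that fiber the condition ``supported in $\CN_i\times E$'' is vacuous (the fiber sits over $\mathbf e\in\CO_i\subset\CN_i$), and the shear $\eta_t$ no longer makes sense since it does not preserve fibers of the projection to $\u(E)$. So the reduction discards exactly the input that forces the support into $E(o)$, and nothing in your third step can recover it.

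Your fallback for the ``heart of the matter'' --- a partial Fourier transform in the $E$-variable plus an uncertainty-type statement --- also does not close the gap: Theorem~\ref{uncert} (and Lemma~\ref{funcert}) require knowing that the Fourier transform is supported in some zero locus, and no such support condition on the fiberwise (or partial) Fourier transform is available at this stage. In the paper, the partial Fourier transform together with stabilizer-equivariance enters only \emph{after} this lemma, in the distinguished case (Lemmas~\ref{haarm}, \ref{van111}, \ref{cdiag}), precisely because Lemma~\ref{support} applies to both $T$ and $\CF_E(T)$ and so controls both supports simultaneously. Your identification of $E(o)$ as the locus where $\phi_v$ fails to be transverse to $\oT_o(\CO_i)$ is the right geometric picture, but to turn it into a proof you must apply the equivariant family $\eta_t$ to the global distribution $T$ and use the openness of $\CO_i$ in $\CN_i$, not descend to a fiber and argue by ``infinitesimal invariance in a transverse direction,'' which is not a property $T$ has.
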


\begin{proof} We follow the method of \cite{AGRS}.
Let $T\in \oD^{-\infty}_{\chi_E}(\u(E)\times E)$ and $(o,v)\in
\CO_i\times E$ be a point in the support of $T$. It suffices to
prove that $v\in E(o)$.

For every $t\in \rk$, define a homeomorphism
\[
 \begin{array}{rcl}
  \eta_t: \u(E)\times E &\rightarrow &\u(E)\times E,\\
                 (x,u)&\mapsto& (x+t \phi_u,u),
 \end{array}
\]
which is checked to be $\breve{\oU}(E)$-intertwining. Therefore
\[
  (\eta_t)_* T\in \oD^{-\infty}_{\chi_E}(\u(E)\times E).
\]
Since $(o,v)$ is in the support of $T$, $\eta_t(o,v)$ is in the
support of $(\eta_t)_* T$. Therefore the assumption implies that
\begin{equation}\label{etat}
  \eta_t(o,v)=(o+t\phi_v, v)\in \CN_i\times E.
\end{equation}
As $\CO_i$ is open in $\CN_i$, (\ref{etat}) implies that
\[
  \phi_v\in \oT_o(\CO_i)=[\u(E),o].
\]

\end{proof}

Fix an element $\mathbf e\in \CO_i$. Extend it to a standard triple
$\mathbf{h},\mathbf{e},\mathbf{f}$ in $\u(E)$, i.e., the
$\rk$-linear map from $\sl_2(\rk)$ to $\u(E)$ specified by
\[
  \left[
                   \begin{array}{cc} 1&0\\ 0&-1\\
                   \end{array}
  \right]
   \mapsto \mathbf{h},\quad
 \left[
                   \begin{array}{cc} 0&1\\ 0&0\\
                   \end{array}
  \right]
   \mapsto \mathbf{e},\quad
   \left[
                   \begin{array}{cc} 0&0\\ 1&0\\
                   \end{array}
  \right]
   \mapsto \mathbf{f},
\]
is a Lie algebra homomorphism. Existence of such an extension is
known as Jacobson-Morozov Theorem. Using this homomorphism, we view
$E$ as an $\sl_2(\rk)$-module with an invariant symplectic form. In
the remaining part of this section assume that $\CO_i$ is
distinguished. By the classification of distinguished nilpotent
orbits (\cite[Theorem 8.2.14]{CM}), we know that $E$ has an
orthogonal decomposition
\begin{equation}\label{decomes}
  E=E_1\oplus E_2\oplus\cdots\oplus E_s,\quad s\geq 0,
\end{equation}
where all $E_j$'s are irreducible $\sl_2(\rk)$-submodules, with
pairwise different even dimensions. Denote by $E^+$ and $E^-$ the
subspaces of $E$ spanned by eigenvectors of $\mathbf h$ with
positive and negative eigenvalues, respectively. Then
\[
  E=E^+\oplus E^-
\]
is a complete polarization of $E$.

\begin{lemp}\label{subs1} (cf.
\cite[Lemma 4.1]{SZ})
One has that $E(\mathbf e)=E^+$.
\end{lemp}
\begin{proof}
Recall that $\u(E)$ is a quadratic space over $\rk$ under the trace form \eqref{quadratic}. For every $v\in E$, we have that $v\in E(\mathbf e)$ if and only if
\begin{eqnarray*}
&&\phi_{v}\in [\u(E), \mathbf e] \Leftrightarrow \phi_{v}\perp [\u(E), \mathbf e]^\perp\\
&&\phantom { \phi_{v}\in [\u(E), \mathbf e] } \Leftrightarrow  \phi_{v}\perp \u(E)^{\mathbf e} \,\,\textrm{ (the centralizer of $\mathbf e$ in $\u(E)$)}\\
&&\phantom { \phi_{v}\in [\u(E), \mathbf e] } \Leftrightarrow  \la x v,  v\ra_E=0\,\, \textrm{ for all } x\in \u(E)^{\mathbf e}.
\end{eqnarray*}
Thus if $v\in E(\mathbf e)$, then we have
\[
 \la \mathbf{e}_j^{2k+1}v,   v\ra_E=0\quad \textrm{ for all $1\leq j\leq s$ and $k\geq 0$},
\]
where $\mathbf{e}_j$ is  the restriction of $\mathbf e$ to
$E_j$ under the decomposition \eqref{decomes}. Therefore $v\in E^{+}$.

On the other hand, every element $x\in \u(E)^{\mathbf e}$ stabilizes $E^{+}$. Therefore $v\in E^{+}$ implies that $\la x v, v\ra_E=0$. This finishes the proof.
\end{proof}

\vsp

Fix a Haar  measure $du'$ on $E'$. For any t.d. space $M$, we define
the partial Fourier transform
\[
   \CF_E: \oD^{-\infty}(M\times E)\rightarrow \oD^{-\infty}(M\times E')
\]
by
\[
  \CF_E(T)(\varphi_M\otimes \varphi'):=T(\varphi_M\otimes \hat{\varphi'}), \quad
  \varphi_M\in \con^\infty_0(M), \,\varphi'\in \con^\infty_0(E'),
\]
where $\hat{\varphi'}\in \con^\infty_0(E)$ is given by
\[
  \hat{\varphi'}(u):=\int_{E'} \psi(\la u,u'\ra_E)\,\varphi'(u')
   \,du'.
\]

For every $o\in \CO_i$, write $E'(o):=E(o)$, viewed as a subset of
$E'$. Lemma \ref{subs1} implies the following
\begin{lemp}\label{haarm}
Let $T\in \oD^{-\infty}(E)$. If $T$ is supported in $E(\mathbf e)$,
and $\CF_E(T)\in \oD^{-\infty}(E')$ is supported in $E'(\mathbf e)$,
then $T$ is a scalar multiple of a Haar  measure of $E^+$.
\end{lemp}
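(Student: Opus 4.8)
The plan is to recognize this as an instance of the elementary ``uncertainty principle'' for the Fourier transform on a $\rk$-vector space, combined with the fact that $E^+$ is Lagrangian and with Lemma \ref{subs1}. I will use two standard facts in the totally disconnected setting: (i) for a closed subspace $W$ of a finite-dimensional $\rk$-vector space $V$, a distribution on $V$ is supported in $W$ precisely when its Fourier transform is invariant under translations by the annihilator $W^\perp$ (the Fourier transform of a Haar measure of $W$ being a Haar measure of $W^\perp$); and (ii) a distribution on $V$ that is both supported in $W$ \emph{and} invariant under translations by $W$ is a scalar multiple of a Haar measure of $W$ --- indeed, being supported in $W$ it is the pushforward of a distribution on $W$, and translation-invariance of that distribution on $W$ forces it to be Haar.

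First I would invoke Lemma \ref{subs1} to get $E(\mathbf e)\subset E^+$; since $E'=E$ as $\rk$-vector spaces, this also gives $E'(\mathbf e)\subset E^+$. Hence the two hypotheses imply, more weakly, that $T$ is supported in $E^+$ and that $\CF_E(T)$ is supported in $E^+$ (viewed inside $E'$). Next I would feed the first of these into fact (i): since $T$ is supported in $E^+$, its Fourier transform $\CF_E(T)$ is invariant under translations by the annihilator of $E^+$ with respect to the pairing $\la\,,\,\ra_E\colon E\times E'\to\rk$. Because this pairing is symplectic and $E=E^+\oplus E^-$ is a complete polarization, $E^+$ is its own annihilator; thus $\CF_E(T)$ is invariant under translations by $E^+\subset E'$.

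Now $\CF_E(T)$ is simultaneously supported in $E^+$ and invariant under translations by $E^+$, so by fact (ii) it is a scalar multiple of a Haar measure of $E^+\subset E'$. Applying the Fourier transform once more and using Fourier inversion, $T$ is recovered from $\CF_E(T)$ up to a nonzero scalar; and by fact (i) the Fourier transform of a Haar measure of $E^+\subset E'$ is a Haar measure of its annihilator in $E$, which is again $E^+$. Therefore $T$ is a scalar multiple of a Haar measure of $E^+$, as claimed.

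Since the genuinely substantive input --- the classification of distinguished nilpotent orbits and the inclusion $E(\mathbf e)\subset E^+$ of Lemma \ref{subs1} --- is already available, I do not anticipate a real obstacle here. The only point demanding care is the bookkeeping of the symplectic duality between $E$ and $E'$: one must confirm that the Lagrangian $E^+$ coincides with its own annihilator under $\la\,,\,\ra_E$, so that the support condition on $\CF_E(T)$ and the translation-invariance of $\CF_E(T)$ deduced from the support condition on $T$ both refer to the very same subspace, which is exactly what lets fact (ii) apply.
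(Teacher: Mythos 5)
Your proof is correct and follows essentially the same route as the paper: both rest on the support/translation-invariance duality under the Fourier transform, the fact that $E^+$ is its own annihilator for the symplectic pairing, and the inclusion $E(\mathbf e)\subset E^+$ from Lemma \ref{subs1}. The only cosmetic difference is that you run the argument on the Fourier side (showing $\CF_E(T)$ is Haar on $E^+$ and then inverting), whereas the paper applies the same duality directly to $T$, obtaining at once that $T$ is supported in $E^+$ and invariant under translations by $E^+$, which saves the final inversion step.
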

\begin{proof}Since $\CF_E(T)$ is supported in $E'(\mathbf e)=E^+$, $T$ is
invariant under translations by elements of
\[
  \{u\in E\mid \la u, u'\ra_E=0, \quad u'\in E'(\mathbf e)\}=E^+.
\]
\end{proof}

Denote by $\breve{\oU}(E,\mathbf e)$ the stabilizer of $\mathbf e\in
\CO_i$ in $\breve{\oU}(E)$, and by $\chi_{E,\mathbf e}$ the
restriction of $\chi_E$ to $\breve{\oU}(E,\mathbf e)$.
\begin{lemp}\label{van111}
Let $T\in \oD^{-\infty}_{\chi_{E,\mathbf e}}(E)$. If $T$ is
supported in $E(\mathbf e)$, and $\CF_E(T)$ is supported in
$E'(\mathbf e)$, then $T=0$.
\end{lemp}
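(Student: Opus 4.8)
The plan is to combine Lemma \ref{haarm} with an equivariance argument that exploits the presence, inside $\breve{\oU}(E,\mathbf e)$, of an element that scales a Haar measure of $E^+$ by a factor incompatible with $\chi_{E,\mathbf e}$. By Lemma \ref{haarm}, the hypotheses already force $T$ to be a scalar multiple $c\cdot \mu$, where $\mu$ is a fixed Haar measure on $E^+$ (extended by zero to $E$). If $c=0$ there is nothing to prove, so assume $c\neq 0$; then $T=c\,\mu$ spans a one-dimensional $\breve{\oU}(E,\mathbf e)$-stable subspace of $\oD^{-\infty}(E)$, so $\breve{\oU}(E,\mathbf e)$ acts on it by a character, and this character must be $\chi_{E,\mathbf e}$ since $T\in\oD^{-\infty}_{\chi_{E,\mathbf e}}(E)$.

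First I would produce a distinguished element of $\breve{\oU}(E,\mathbf e)\setminus\oU(E)$. Recall $\mathbf{e}$ sits in a standard $\sl_2$-triple $\mathbf{h},\mathbf{e},\mathbf{f}$, and the associated cocharacter gives an action of $\BG_m$ on $E$; the element $-1\in\BG_m$ acts on $E$ by an involution $\sigma$ which acts as $-1$ on each odd-weight space and $+1$ on each even-weight space. Because all the $E_k$ are even-dimensional irreducible $\sl_2$-modules, $\sigma$ acts on the symplectic form on $E$ by $-1$ (the weights in each $E_k$ are the odd integers $-(d_k-1),\dots,d_k-1$, so $\sigma=-1$ on all of $E_k$), hence $(\sigma,-1)\in\breve{\oU}(E)\setminus\oU(E)$; and since $\sigma$ commutes with the whole $\sl_2$-triple, $(\sigma,-1)$ lies in $\breve{\oU}(E,\mathbf e)$. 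Thus $\chi_{E,\mathbf e}(\sigma,-1)=-1$. On the other hand, under the action \eqref{actioninf}, $(\sigma,-1).u=-\sigma u$, and since $E^+=E^-$ under $\sigma$ (it interchanges positive- and negative-weight spaces? — no: $\sigma=-1$ on all of $E$, so it preserves $E^+$), the map $u\mapsto -\sigma u=-(-u)=u$... here I would instead track the Jacobian directly: the transformation $u\mapsto -\sigma u$ of $E^+$ has Jacobian determinant $(-1)^{\dim E^+}\det(\sigma|_{E^+})=(-1)^{\dim E^+}\cdot(\pm1)$, which I expect to work out to $+1$, so that $(\sigma,-1)_*\mu=\mu$, forcing $c=\chi_{E,\mathbf e}(\sigma,-1)\,c=-c$, hence $c=0$.

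The main obstacle is exactly this last numerical check: I must identify an element of $\breve{\oU}(E,\mathbf e)$ on which $\chi_{E,\mathbf e}$ is $-1$ while its action on a Haar measure of $E^+$ is trivial (or, more robustly, an element on which the two signs disagree). The element $-1\in\oU(E)$ itself lies in $\oU(E)$, hence $\chi_E=1$ on it, so it is useless; one genuinely needs the extra $\BG_m$ from the $\sl_2$-triple, or some reflection in $\breve{\oU}(E)\setminus\oU(E)$ stabilizing $\mathbf e$. I would double-check the parity computation using the weight decomposition: $\dim E^+=\sum_k \tfrac{d_k}{2}$ and $\det(\sigma|_{E^+})=(-1)^{\dim E^+}$ since $\sigma=-1$ everywhere, so the Jacobian of $u\mapsto(\sigma,-1).u=-\sigma u=u$ on $E^+$ is $+1$ — wait, $-\sigma u=-(-u)=u$ identically, so the action on $E^+$ is literally the identity and certainly fixes $\mu$. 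Hence $c=\chi_{E,\mathbf e}(\sigma,-1)c=-c$ and $T=0$. If this particular $\sigma$ turned out to lie in $\oU(E)$ after all (it does not, by the parity of $\dim E_k$), the fallback is to enlarge to $\breve{\oU}(E,\mathbf e)$ using a diagonal sign on the polarization $E^+\oplus E^-$, which visibly scales $\mu$ nontrivially while lying outside $\oU(E)$, again producing the desired contradiction.
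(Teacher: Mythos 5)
Your overall strategy is the right one and is in fact the paper's: use Lemma \ref{haarm} to reduce to $T=c\,\mu$ with $\mu$ a Haar measure on $E^+$, then contradict $\chi_{E,\mathbf e}$-equivariance by exhibiting an element of $\breve{\oU}(E,\mathbf e)$ with $\chi_E=-1$ whose action on $E$ preserves $E^+$ (hence sends $\mu$ to a positive multiple of itself). But the element you produce does not exist as claimed, and producing it is precisely the crux of this lemma. Since every $\mathbf h$-eigenvalue on $E$ is odd, your $\sigma$ (the value of the cocharacter at $-1$) is simply $-\mathrm{id}_E$. This \emph{preserves} the symplectic form, $\la -u,-v\ra_E=\la u,v\ra_E$, so your assertion that it scales the form by $-1$ is false and $(\sigma,-1)\notin\breve{\oU}(E)$; your parenthetical dismissal (``it does not, by the parity of $\dim E_k$'') is mistaken, since $-\mathrm{id}_E\in\Sp(E)=\oU(E)$ always. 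There is a second, independent failure: for an element with $\delta=-1$, membership in the stabilizer $\breve{\oU}(E,\mathbf e)$ requires, by (\ref{actioninf}), that $g\mathbf e g^{-1}=-\mathbf e$, i.e.\ $g$ must \emph{anti}-commute with $\mathbf e$, not commute with the whole triple as you require; $-\mathrm{id}_E$ fails this as well. Your fallback element ($+1$ on $E^+$, $-1$ on $E^-$) is indeed anti-symplectic, but it does not satisfy $\theta\mathbf e\theta^{-1}=-\mathbf e$ (on the weight $-1$ space one gets $-\mathbf e$, on the weight $+1$ space one gets $+\mathbf e$), so it too lies outside $\breve{\oU}(E,\mathbf e)$, and ``scales $\mu$ nontrivially'' is in any case not the property you want.

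The repair, which is what the paper does, is to take $g$ acting by the scalar $(-1)^n$ on the $\mathbf h$-eigenspace of eigenvalue $2n+1$ (signs alternating in steps of $4$ in the weight, not the uniform sign $-1$). Then: (i) pairing the weight $2n+1$ space against the weight $-(2n+1)=2(-n-1)+1$ space gives the factor $(-1)^n(-1)^{-n-1}=-1$, so $\la gu,gv\ra_E=-\la u,v\ra_E$ and $(g,-1)\in\breve{\oU}(E)$; (ii) since $\mathbf e$ shifts $n$ to $n+1$, $g\mathbf e g^{-1}=-\mathbf e$, so $(g,-1)\in\breve{\oU}(E,\mathbf e)$; (iii) the action $u\mapsto -gu$ is $\pm 1$ on each weight space, hence preserves $E^+$ and its Haar measure. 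Equivariance then gives $c\,\mu=\chi_E(g,-1)\,c\,\mu=-c\,\mu$, so $c=0$. Without this (or some equally valid) element, your argument has a genuine gap at its decisive step.
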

\begin{proof}
By Lemma \ref{haarm}, $T$ is a scalar multiple of a Haar  measure of
$E^+$. Note that all eigenvalues of $\mathbf h$ on $E$ are odd
integers. Let $g:E\rightarrow E$ be the linear map which is the
scalar multiplication by $(-1)^n$ on the $\mathbf h$-eigenspace with
eigenvalue $2n+1$, $n\in \Z$. It is clear that $(g,-1)\in
\breve{\oU}(E,\mathbf e)$, and leaves the Haar  measure of
$E^+$-invariant. This finishes the proof.
\end{proof}

\vsp Fix a positive $\breve{\oU}(E)$-invariant measure $do$ on
$\CO_i$ (which always exists), and a Haar  measure $d\breve{g}$ on
$\breve{\oU}(E)$. Define a submersion
\[
  \begin{array}{rcl}
  \rho_\mathbf{e}: \breve{\oU}(E)\times E&\rightarrow&\CO_i\times E,\\
      (\breve{g}, v)&\mapsto &\breve{g}.(\mathbf e,v),
  \end{array}
\]
and define the pull back
\[
     \begin{array}{rcl}
   \rho_\mathbf{e}^*: \oD^{-\infty}(\CO_i\times E) &\rightarrow& \oD^{-\infty}(\breve{\oU}(E)\times
   E),\\
           f\,do\otimes du&\mapsto & \rho_{\mathbf e}^*(f)\,d\breve{g}\otimes
           du,
   \end{array}
\]
where $du$ is any Haar  mesure on $E$, $f\in
\con^{-\infty}(\CO_i\times E)$, and $\rho_{\mathbf e}^*(f)$ is the
usual pull back of a generalized function.  By Frobenius reciprocity (cf. \cite[Section 1.5]{Be84}), there is a well defined
linear isomorphism
\begin{equation}\label{rmathbfe}
  r_\mathbf{e}: \oD^{-\infty}_{\chi_E}(\CO_i\times
  E)\stackrel{\sim}{\rightarrow}
  \oD^{-\infty}_{\chi_{E,\mathbf{e}}}(E),
\end{equation}
specified by
\[
  \rho_\mathbf{e}^*(T)=\chi_E\,d\breve{g}\otimes  r_\mathbf{e}(T),\quad T\in \oD^{-\infty}_{\chi_E}(\CO_i\times
  E).
\]
Similarly, by using the action of $\breve{\oU}(E)$ on $\CO_i\times
E'$, we define a map
\[
     \begin{array}{rcl}
   {\rho'}_\mathbf{e}^*: \oD^{-\infty}(\CO_i\times E') &\rightarrow& \oD^{-\infty}(\breve{\oU}(E)\times
   E'),\\
        \end{array}
\]
and a linear isomorphism
\[
  r'_\mathbf{e}: \oD^{-\infty}_{\chi_E}(\CO_i\times E')\stackrel{\sim}{\rightarrow}
  \oD^{-\infty}_{\chi_{E,\mathbf{e}}}(E').
\]

The routine verification of the following lemma is left to the
reader.
\begin{lemp}\label{cdiag}
The diagram
\[
   \begin{CD}
     \oD^{-\infty}_{\chi_E}(\CO_i\times E)     @>r_{\mathbf e}>>     \oD^{-\infty}_{\chi_{E,\mathbf e}}(E) \\
      @V \CF_E VV                 @V \CF_E  VV\\
     \oD^{-\infty}_{\chi_E}(\CO_i\times E')   @>r'_{\mathbf e}>>     \oD^{-\infty}_{\chi_{E,\mathbf e}}(E')\\
   \end{CD}
\]
commutes.
\end{lemp}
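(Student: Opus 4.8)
The plan is to deduce the commutativity of the square from a single identity between the partial Fourier transform and the pull-back maps $\rho_{\mathbf e}^{*}$, ${\rho'}_{\mathbf e}^{*}$, and then to prove that identity by factoring the submersions $\rho_{\mathbf e}$, $\rho'_{\mathbf e}$. Recall that $r_{\mathbf e}(T)$ and $r'_{\mathbf e}(S)$ are characterized by
\[
  \rho_{\mathbf e}^{*}(T)=\chi_E\,d\breve g\otimes r_{\mathbf e}(T),\qquad
  {\rho'}_{\mathbf e}^{*}(S)=\chi_E\,d\breve g\otimes r'_{\mathbf e}(S),
\]
and that $\CF_E(U)$, evaluated on a product test function $\varphi_M\otimes\varphi'$, is $U$ evaluated on $\varphi_M\otimes\widehat{\varphi'}$; in particular $\CF_E$ does nothing in the $M$-direction, so on $\oD^{-\infty}(\breve{\oU}(E)\times E)$ it commutes with tensoring by the fixed nonzero distribution $\chi_E\,d\breve g$ on the $\breve{\oU}(E)$-factor. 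Granting the identity
\[
  \CF_E\circ\rho_{\mathbf e}^{*}={\rho'}_{\mathbf e}^{*}\circ\CF_E
  \qquad\text{on }\oD^{-\infty}_{\chi_E}(\CO_i\times E),
\]
one applies $\CF_E$ to the relation defining $r_{\mathbf e}(T)$, substitutes this identity and the relation defining $r'_{\mathbf e}(\CF_E T)$, and cancels the nonzero factor $\chi_E\,d\breve g$ (the map $S\mapsto\chi_E\,d\breve g\otimes S$ being injective) to obtain $\CF_E(r_{\mathbf e}(T))=r'_{\mathbf e}(\CF_E(T))$, which is the lemma.

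To prove the displayed identity I would factor $\rho_{\mathbf e}=(q\times\mathrm{id}_E)\circ\sigma$ and $\rho'_{\mathbf e}=(q\times\mathrm{id}_{E'})\circ\sigma'$, where $q\colon\breve{\oU}(E)\to\CO_i$, $\breve g\mapsto\breve g.\mathbf e$, is the orbit map, $\sigma(\breve g,v)=(\breve g,\breve g.v)$ uses the action \eqref{actioninf} of $\breve{\oU}(E)$ on $E$, and $\sigma'(\breve g,v)=(\breve g,gv)$ uses its action on $E'$; both $\sigma$ and $\sigma'$ are isomorphisms of $\rk$-manifolds, so $\rho_{\mathbf e}^{*}=\sigma^{*}\circ(q\times\mathrm{id}_E)^{*}$ and similarly for $\rho'_{\mathbf e}$. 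The factor $q\times\mathrm{id}$ is harmless: since $\CF_E$ operates only on the last variable while $q\times\mathrm{id}$ is the identity there, $\CF_E\circ(q\times\mathrm{id}_E)^{*}=(q\times\mathrm{id}_{E'})^{*}\circ\CF_E$, which one checks directly against test functions of product type (these span $\con^\infty_0$ of a product of t.d.\ spaces).

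It thus remains to prove $\CF_E\circ\sigma^{*}={\sigma'}^{*}\circ\CF_E$, which is the substantive step. Over each $\breve g=(g,\delta)\in\breve{\oU}(E)$, $\sigma$ and $\sigma'$ restrict to the linear automorphisms $v\mapsto\delta gv$ of $E$ and $v\mapsto gv$ of $E'$; the $\breve{\oU}(E)$-invariance of the pairing $\la\,,\,\ra_E\colon E\times E'\to\rk$ (which is part of the invariance of $\la\,,\,\ra_\j$) says precisely that $\la\breve g.u,\breve g.w\ra_E=\la u,w\ra_E$, i.e.\ that these two automorphisms are mutually inverse-transpose with respect to $\la\,,\,\ra_E$. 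Moreover, being $\rk$-linear symplectic similitudes of $E$ with similitude factor $\pm1$, each has determinant $\pm1$, so all Jacobian constants entering the change of variables are trivial. The standard transformation law of the Fourier transform under a linear substitution — verified fibrewise over $\breve{\oU}(E)$ and against product test functions, since $\CF_E$ is defined that way — then yields $\CF_E\circ\sigma^{*}={\sigma'}^{*}\circ\CF_E$, and composing with the previous step gives the identity, hence the lemma.

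Throughout one must keep track of the normalizing positive invariant measures on $\CO_i$, $E$, $E'$, $\breve{\oU}(E)$ and on their products that are used to pass between generalized functions and distributions; these are all canonical and invariant, so this bookkeeping is routine. The one place carrying genuine content — and the step I expect to be the main obstacle — is this fibrewise transformation law: one must verify carefully that the inverse-transpose relation between the $E$-action and the $E'$-action of $\breve{\oU}(E)$, together with the triviality of the determinants, is exactly what converts $\sigma^{*}$ into ${\sigma'}^{*}$ after the partial Fourier transform.
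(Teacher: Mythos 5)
Your proposal is correct; the paper gives no argument here (it declares the verification routine and leaves it to the reader), and your write-up is exactly the intended check: factor $\rho_{\mathbf e}$ and $\rho'_{\mathbf e}$ through the orbit map, reduce to a fibrewise linear substitution, and use that $\la (g,\delta).u,\,(g,\delta).u'\ra_E=\la u,u'\ra_E$ for the twisted action on $E$ against the untwisted action on $E'$, together with $\abs{\det g}_\rk=1$, so the partial Fourier transform intertwines $\sigma^*$ with ${\sigma'}^*$ and the factor $\chi_E\,d\breve g$ cancels. No gaps beyond the measure bookkeeping you already flag, which is indeed routine.
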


\vsp

Now we are ready to prove Proposition \ref{indn} for distinguished $\CO_i$ (in the symplectic case). Let $T\in \oD^{-\infty}_{\chi_E}(\CN_i\times E)$.
Then Lemma \ref{support} implies that
\[
  r_\mathbf{e}(T|_{\CO_i\times E})\in \oD^{-\infty}_{\chi_{E,\mathbf
  e}}(E)\quad\textrm{is supported in $E(\mathbf e)$.}
\]
Similarly, since $\CF_E(T)\in\oD^{-\infty}_{\chi_E}(\CN_i\times
E')$, we have
\[
  r'_\mathbf{e}(\CF_E(T)|_{\CO_i\times E'})\in \oD^{-\infty}_{\chi_{E,\mathbf
  e}}(E') \quad\textrm{is supported in $E'(\mathbf e)$.}
\]
Lemma \ref{van111} and Lemma \ref{cdiag} implies that
\[
  r_\mathbf{e}(T|_{\CO_i\times E})=0,
\]
which implies that $T|_{\CO_i\times E}=0$. This finishes the proof.

\vsp

\appendix

\section{Some Mackey theory}\label{appa}

Let $\rk$ be a non-archimedean local field of characteristic zero, and let
\[
 (A,\tau)=\left\{
                  \begin{array}{l}
                 (\rk\times \rk, \textrm{the nontrivial
                 automophism}),\\
                  (\textrm{a quadratic field extension of $\rk$}, \textrm{the nontrivial
                 automophism}),\, \textrm{or}\\
                    (\rk, \textrm{the trivial
                 automophism}).
                 \end{array}
            \right.
\]
It is a commutative involutive algebra over $\rk$. Let $\epsilon=-1$, and let $(E, \la\,,\,\ra_E)$ be an $\epsilon$-Hermitian $A$-module. Then the group $\oU(E)$ is $\GL(n)$, $\oU(n)$, or $\Sp(2n)$, respectively,
with $2n=\dim_\rk (E)$. Write $E_\rk:=E$, viewed as a symplectic
$\rk$-vector space under the form
\[
  \la u,v\ra_{E_\rk}:=\frac{1}{\dim_{\rk}(A)}\tr_{A/\rk}(\la u,v\ra_E).
\]

Recall the Heisenberg group
\[
  \oH(E)=E\times  A^{\tau=-\epsilon}=E_\rk\times \rk=\oH(E_\rk)
\]
with group multiplication
\[
   (u,t)(u',t')=(u+u', t+t'+\frac{\la u,u'\ra_E}{2}-\frac{\la
   u',u\ra_E}{2})=(u+u', t+t'+\la u,u'\ra_{E_\rk}).
\]
Denote by $\widetilde{\Sp}(E_\rk)$ the metaplectic cover of the
symplectic group $\Sp(E_\rk)$. It induces a double cover
$\widetilde{\oU}(E)$ of $\oU(E)\subset \Sp(E_\rk)$.  For any
non-trivial character $\psi$ of $\rk$, denote by $\omega_\psi$ the
corresponding smooth oscillator representation of
\begin{equation}\label{jacobisp}
  \widetilde{\Sp}(E_\rk)\ltimes \oH(E_\rk).
\end{equation}
Up to isomorphism, this is the only genuine smooth representation
which, as a representation of $\oH(E_\rk)$, is irreducible and has
central character $\psi$. We regard $\omega_\psi$ as a representation of $\widetilde{\oU}(E)\ltimes \oH(E)$, as the later is a subgroup of \eqref{jacobisp}.

The following Mackey-theoretic result is known
(cf. \cite[Page 222]{AP06}). We provide a proof for the sake of
completeness.

\begin{prp}\label{repj}
With the notation as above, for every genuine irreducible admissible
smooth representation $\pi_{\widetilde{\oU}}$ of
$\widetilde{\oU}(E)$, the tensor product
$\pi_{\widetilde{\oU}}\otimes \omega_\psi$ is an irreducible
admissible smooth representation of $\oJ(E)=\oU(E)\ltimes \oH(E)$.
\end{prp}
\begin{proof}
The smoothness and admissibility are clear. We prove that
$\pi_{\widetilde{\oU}}\otimes \omega_\psi$ is irreducible as a
smooth representation of $\oU(E)\ltimes \oH(E)$. The space
\[
  \oH_0:=\Hom_{\oH(E)}(\omega_\psi, \pi_{\widetilde{\oU}}\otimes
  \omega_\psi)
\]
is a smooth representation of $\widetilde{\oU}(E)$ under the action
\[
  (\tilde{g}.\phi)(v):=g.(\phi(\tilde{g}^{-1}.v)),
\]
where
\[
 \quad
  \tilde{g}\in\widetilde{\oU}(E), \,\phi\in  \oH_0, \, v\in \omega_\psi,
\]
and $g$ is the image of $\tilde{g}$ under the quotient map
$\widetilde{\oU}(E)\rightarrow \oU(E)$. Let $\pi_{\oJ}$ be a nonzero
$\oU(E)\ltimes \oH(E)$-subrepresentation of
$\pi_{\widetilde{\oU}}\otimes
  \omega_\psi$, then
\[
  \Hom_{\oH(E)}(\omega_\psi, \pi_{\oJ})
\]
is a nonzero $\widetilde{\oU}(E)$-subrepresentation of $\oH_0$.
Since the linear map
 \[
   \pi_{\widetilde{\oU}} \rightarrow \oH_0,\quad   v\mapsto  v\otimes (\,\cdot\,)
\]
is bijective and $\widetilde{\oU}(E)$-intertwining, $\oH_0$ is
irreducible. Therefore
\[
  \Hom_{\oH(E)}(\omega_\psi, \pi_{\oJ})=\oH_0,
\]
and consequently, $\pi_{\oJ}=\pi_{\widetilde{\oU}}\otimes
\omega_\psi$.

\end{proof}

\section{The Gelfand-Kazhdan criterion for multiplicity one pairs}\label{appb}

Recall the notion of t.d. groups from Section \ref{reduction}. The following result is a form of the Gelfand-Kazhdan criterion for multiplicity one pairs.

\begin{prp}\label{gkmo}
Let $G$ be a t.d. group with a closed subgroup $S$. Let $\sigma$ be
a continuous anti-automorphism of $G$ such that $\sigma(S)=S$.
Assume that for every generalized function $f$ on $G$ or on $S$, the
condition
\[
   f(sxs^{-1})=f(x)\quad \textrm{ for all } s\in S
\]
implies that
\[
  f(x^\sigma)=f(x).
\]
Then for all irreducible admissible smooth representation $\pi_G$ of
$G$, and $\pi_S$ of $S$, one has that
\[
   \dim \Hom_{S}(\pi_G\otimes \pi_S,\C)\leq 1.
\]
\end{prp}
\begin{proof}
This is proved for real reductive groups in \cite[Corollary 2.5]{SZ08}. The same proof works here. We sketch a proof for convenience of the reader.

Denote by $\Delta(S)$ the diagonal subgroup $S$ of $G\times S$. The
assumption on $G$ implies that every bi-$\Delta(S)$ invariant
generalized function on $G\times S$ is $\sigma\times
\sigma$-invariant. Then the usual Gelfand-Kazhdan criterion (cf.
\cite[Theorem 2.3]{SZ08}) implies that
\begin{equation}\label{multiplication}
  \dim \Hom_S(\pi_G \otimes \pi_S,\C)\cdot \dim \Hom_S(\pi_G^\vee \otimes\pi_S^\vee,\C)\leq
  1.
\end{equation}
Here and henceforth, ``$\,^\vee$" stands for the contragredient of
an admissible smooth representation.

Denote by $\sigma'$ the automorphism $g\mapsto \sigma(g^{-1})$. By
considering characters of irreducible admissible smooth representations (which are
conjugation invariant generalized functions on the groups), the
assumption implies that
\[
  \pi_G^\vee\cong \pi_G^{\sigma'}\quad\textrm{and}\quad \pi_S^\vee\cong
  \pi_S^{\sigma'}.
\]
Here $\pi_G^{\sigma'}$ is the representation of $G$ which has the
same underlying space as that of $\pi_G$, and whose action is given
by $g\mapsto \pi_G(\sigma'(g))$. The representation
$\pi_S^{\sigma'}$ is defined similarly. Therefore the two
factors in \eqref{multiplication} are equal to each other, and
consequently,
\[
  \dim \Hom_S(\pi_G \otimes \pi_S,\C)\leq 1.
\]

\end{proof}

For unimodular groups, we have
\begin{corp}\label{gkmo2}
Let $G$ be a unimodular t.d. group with a  unimodular closed subgroup $S$. Let $\sigma$ be
a continuous anti-automorphism of $G$ such that $\sigma(S)=S$.
Assume that for every generalized function $f$ on $G$, the
condition
\[
   f(sxs^{-1})=f(x)\quad \textrm{ for all } s\in S
\]
implies that
\[
  f(x^\sigma)=f(x).
\]
Then for all irreducible admissible smooth representation $\pi_G$ of
$G$, and $\pi_S$ of $S$, one has that
\[
   \dim \Hom_{S}(\pi_G\otimes \pi_S,\C)\leq 1.
\]
\end{corp}
\begin{proof}
When both $G$ and $S$ are unimodular, the assumption of the corollary implies the assumption of Proposition \ref{gkmo}. Therefore the corollary is a consequence of Proposition \ref{gkmo}.
\end{proof}

\section{An uncertainty theorem for distributions with supports}\label{appc}

Let $\rk$ be a non-archimedean local field of characteristic zero.
Fix a non-trivial character $\psi$ of $\rk$. Let $E$ and $F$ be two
finite-dimensional $\rk$-vector spaces which are dual to each other,
i.e., a non-degenerate bilinear map
\[
  \la \,,\,\ra: E\times F\rightarrow \rk
\]
is given. The Fourier transform
\[
  \begin{array}{rcl}
    \oD_0^\infty(F)&\rightarrow &\con_0^\infty(E)\\
        \omega&\mapsto &\hat{\omega}
    \end{array}
\]
is the linear isomorphism given by
\[
   \hat{\omega}(x):=\int_{F} \psi(\la x,y\ra)\,
   d\omega(y),\quad x\in E.
\]
For every $T\in \oD^{-\infty}(E)$, its Fourier transform $\widehat
T\in \con^{-\infty}(F)$ is given by
\[
  \widehat{T}(\omega):=T(\hat{\omega}), \quad \omega\in \oD_0^\infty(F).
\]

For every subset $X$ of $E$, a point $x\in X$ is said to be regular
if there is an open neighborhood $U$ of $x$ in $E$ such that $U\cap
X$ is a closed locally analytic submanifold of $U$. In this case,
the tangent space $\oT_x(X)\subset E$ is defined as usual. We define
the conormal space to be
\[
  \operatorname{N}^*_x(X):=\{v\in F\mid \la u,v\ra=0, \quad u\in \oT_x(X)\}.
\]

The uncertainty principle says that a distribution and its Fourier
transform can not be simultaneously arbitrarily concentrated. The
purpose of this appendix is to prove the following theorem, which is
a form of the uncertainty principle.

\begin{thm}\label{uncert}
Let $x$ be a regular point in a close subset $X$ of $E$. Let
$f:F\rightarrow \rk$ be a polynomial function of degree $d\geq 1$, and denote by $f_d$ its homogeneous component of degree $d$. Let $T\in \oD^{-\infty}(E)$ be a distribution supported in $X$, with
its Fourier transform $\widehat T\in \con^{-\infty}(F)$ supported in
the zero locus of $f$.  If $f_d$ take nonzero values at some points
of  $\operatorname{N}^*_x(X)$, then $T$ vanishes on some open
neighborhood of $x$ in $E$.
\end{thm}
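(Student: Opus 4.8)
The plan is to prove this uncertainty/concentration result by a local argument near $x$, reducing to the model case where $X$ is (locally) a linear subspace and then exploiting the homogeneity of the top-degree part $f_d$.

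\textbf{Step 1: Localize and flatten.} Since $x$ is a regular point of the closed set $X$, there is an open neighborhood $U$ of $x$ such that $U\cap X$ is a closed locally analytic submanifold of $U$. After a locally analytic change of coordinates on $U$ we may assume $U\cap X = U\cap V$ for a linear subspace $V\subset E$ through (the image of) $x$; by a further translation I would move $x$ to $0$. The distribution $T$ restricted to $U$ is then supported on $V$, hence — identifying $\oD^{-\infty}$ of a submanifold with distributions on the ambient space supported there — it is (locally) the pushforward of a distribution $T_0$ on $V$. This is where I must be a little careful: the change of coordinates distorts the Fourier transform, so rather than literally straightening $X$ I would instead argue with the \emph{conormal} data, which is coordinate-independent: pick $v_0\in \operatorname{N}^*_x(X)$ with $f_d(v_0)\neq 0$. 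The key geometric fact is that $T$ being supported in $X$ forces $\widehat T$, microlocally near the codirection $v_0$, to behave like a distribution on the line $\rk v_0$ — more precisely, $\widehat T$ is smooth in the directions transverse to $\operatorname{N}^*_x(X)$ near $x$.

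\textbf{Step 2: Reduce to one variable.} Choose coordinates $y_1,\dots,y_m$ on $F$ so that $v_0$ points along the $y_1$-axis and $\operatorname{N}^*_x(X)$ is contained in a coordinate subspace; then near the relevant region $\widehat T$ is, up to smoothing in $y_2,\dots,y_m$, a generalized function of $y_1$ alone. Restricting (slicing) appropriately, I get a generalized function $g(y_1)$ on a neighborhood of $0$ in $\rk$ which is annihilated by the polynomial $f$ restricted to the $y_1$-line, call it $p(y_1)$. Now $p(y_1) = f_d(v_0)\,y_1^{\,d} + (\text{lower order})$, and since $f_d(v_0)\neq 0$ this polynomial is \emph{not identically zero} and has $0$ as a zero of finite order $\leq d$. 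A generalized function on a $p$-adic neighborhood of $0$ supported in the finite set $\{p=0\}$ is a finite linear combination of Dirac deltas and their ``derivatives'' (in the $p$-adic sense these are honest distributions supported at points), hence is \emph{not} the Fourier transform of a distribution on a linear subspace unless it vanishes identically near $0$ — because such a Fourier transform, coming from $T_0$ on $V$, is translation-invariant along $V^{\perp}$ and in particular cannot be concentrated at finitely many points of the $y_1$-line unless it is zero there.

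\textbf{Step 3: Conclude.} Running the slicing argument over a small product neighborhood, I conclude $\widehat T$ vanishes on an open set of the form (neighborhood of $\operatorname{N}^*_x(X)$)$\times$(punctured-out piece), which by the support condition on $T$ (it lives on $X$, so $\widehat T$ determines $T$ near $x$ via the transverse directions) forces $T$ itself to vanish on an open neighborhood of $x$ in $E$. Formally, the cleanest route is: $T$ supported in $X$ $\Rightarrow$ locally $T=(\iota_V)_* T_0$ $\Rightarrow$ $\widehat T$ is, locally, constant along $V^{\perp}$ $\Rightarrow$ $\widehat T$ being also killed by $f$ and $f_d|_{\operatorname{N}^*_x(X)}\not\equiv 0$ forces $\widehat T = 0$ near $x$ in the $V^{\perp}$-directions $\Rightarrow$ $T_0=0$ near $x$.

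\textbf{Main obstacle.} The delicate point is Step 1--2: making rigorous the passage ``$T$ supported in a submanifold $X$ $\Rightarrow$ $\widehat T$ is, near $x$, genuinely a function/distribution of the conormal variables only,'' since this is a microlocal statement and the non-archimedean setting lacks the usual wavefront-set machinery. I expect to handle it by the elementary device of using a partition of unity subordinate to the local product structure $U\cong (U\cap V)\times W$ with $W$ transverse, writing $T$ on each piece as $T_0\otimes(\text{measure on }W\text{-slice through }0)$ after translating, so that $\widehat T$ literally factors as (a distribution on $V^*$)$\boxtimes$(a function on $W^*$), and then the polynomial $f$ restricted to the affine line $x + \rk v_0$ (which meets $W^*$ transversally with nonvanishing top coefficient) does the separating. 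Everything else is the standard fact that a nonzero polynomial on $\rk$ has only finitely many, finite-order zeros, together with the observation that a nonzero translation-invariant distribution on $V^{\perp}$ is never supported on a proper subset.
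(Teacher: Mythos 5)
There is a genuine gap, and it sits exactly where you flag your ``main obstacle.'' Your plan hinges on the claim that, because $x$ is a regular point of $X$, one can locally write $T=(\iota_V)_*T_0$ for a \emph{linear} $V$ and then read off that $\widehat T$ (or a localized version of it) is constant along $V^\perp$, so that the support condition $\operatorname{supp}\widehat T\subset Z_f$ forces vanishing. But the flattening of $X$ is only a locally analytic diffeomorphism of $E$; it is not linear, it does not commute with the Fourier transform, and it does not preserve the hypothesis on $\operatorname{supp}\widehat T$. Your proposed substitutes --- that $\widehat T$ is ``smooth in the directions transverse to $\operatorname{N}^*_x(X)$,'' or that after a partition of unity adapted to a product chart $U\cong(U\cap X)\times W$ the transform ``literally factors'' as a box tensor product --- are unjustified and in general false for curved $X$: a product chart gives no linear splitting of $E$, so nothing factors on the Fourier side. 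This reduction to the flat case is precisely the nontrivial microlocal ingredient of the paper's proof: it defines the $p$-adic wave front set $\mathrm{WF}_x(T)$ in the sense of Heifetz and invokes Aizenbud's theorem (Lemma \ref{trans}, quoting \cite[Theorem 4.1.2]{Ai}) that for $T$ supported in $X$ with $x$ regular, $\mathrm{WF}_x(T)$ is invariant under translations by $\operatorname{N}^*_x(X)$ --- the point being that the wave front set, unlike $\widehat T$ itself, \emph{is} invariant under analytic diffeomorphisms fixing $x$ with identity differential. Contrary to your remark, the non-archimedean setting does have this machinery, and the proof needs it (or an equivalent argument, which you would then have to supply).

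The second missing piece is quantitative. Any local conclusion at $x$ requires cutting off, i.e.\ passing from $T$ to $1_UT$, and then $\widehat{1_UT}=\widehat{1_U}*\widehat T$ is supported only in a compact fattening $Z_f+V_1$ of $Z_f$, not in $Z_f$. To get decay of $\widehat{1_UT}$ along the dilates $\lambda V_2$ of a neighborhood of a conormal direction $v_0$ with $f_d(v_0)\neq 0$, one needs the asymptotic cone-avoidance estimate $(Z_f+V_1)\cap\lambda V_2=\emptyset$ for $\abs{\lambda}_\rk$ large (the paper's Lemma \ref{intemp}), which compares $f$ with its top homogeneous part $f_d$ uniformly on a conical neighborhood. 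Your Step 2 replaces this by ``restricting $\widehat T$ to the $y_1$-line,'' but restriction of a generalized function to a line is not a defined operation here, and the observation that $f$ has finitely many zeros on $\rk v_0$ does not control $Z_f+V_1$ near that line at infinity. (Incidentally, in the $p$-adic setting a distribution supported at a point is just a multiple of a delta --- there are no transverse derivatives --- so your delta-plus-derivatives discussion is an archimedean reflex; the real content is the cone estimate and the diffeomorphism-invariance above.) Where your sketch does go through --- $X$ literally affine near $x$, translation-invariance of $\widehat{1_UT}$ along $\operatorname{N}^*_x(X)$ combined with vanishing on $\lambda V_2$ --- it reproduces the paper's endgame; the two lemmas just named are what you would still have to prove.
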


\noindent {\bf Remark}: The archimedean analog of Theorem
\ref{uncert} also holds. This is a direct consequence of \cite[Lemma
2.2]{JSZ}.

\vsp

Fix a non-archimedean multiplicative norm
\[
\abs{\,\cdot\,}_\rk:\rk\rightarrow [0,+\infty)
\]
which defines the topology of $\rk$. Also fix a non-archimedean norm
(multiplicative with respect to $\abs{\,\cdot\,}_\rk$)
\[
\abs{\,\cdot\,}_{F}:F\rightarrow [0,+\infty),
\]
which automatically defines the topology of $F$.

Let $f$ and $f_d$ be as in Theorem \ref{uncert}, and denote by
$Z_f\subset F$ the zero locus of $f$. Write $f_0:=f-f_d$, which is a
polynomial function of degree $\leq d-1$. Then
\begin{equation}\label{od}
   \abs{f_0(y)}_\rk =o(\abs{y}_{F}^d), \quad
  \textrm{as }\abs{y}_F\rightarrow +\infty.
\end{equation}

\begin{lemt}\label{intemp}
Let $V_1$ and $V_2$ be two compact open subsets of $F$. If $f_d$ has
no zero in $V_2$, then
\[
  (Z_{f}+V_1)\cap \lambda V_2=\emptyset
\]
for all $\lambda\in \rk^\times$ with $\abs{\lambda}_\rk$
sufficiently large.
\end{lemt}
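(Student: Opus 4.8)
The plan is to prove Lemma \ref{intemp} by a straightforward scaling argument, exploiting the fact that $f_d$ is homogeneous of degree $d$ and nowhere vanishing on the compact set $V_2$, while the lower-order remainder $f_0$ grows strictly more slowly by \eqref{od}. The point of contact with $Z_f$ is that if $z \in Z_f$, then $f_d(z) = -f_0(z)$, so $\abs{f_d(z)}_\rk = \abs{f_0(z)}_\rk = o(\abs{z}_F^d)$; on the other hand, for $z$ of the form $\lambda w + v_1$ with $w \in V_2$, $v_1 \in V_1$, the dominant term $f_d(\lambda w + v_1)$ has size comparable to $\abs{\lambda}_\rk^d$ because $f_d(w)$ is bounded below on $V_2$. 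These two estimates are incompatible for $\abs{\lambda}_\rk$ large.

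First I would record the two quantitative inputs. Since $f_d$ is continuous and nonvanishing on the compact set $V_2$, there are constants $0 < c \leq C$ with $c \leq \abs{f_d(w)}_\rk \leq C$ for all $w \in V_2$; also $V_1$ and $V_2$ are bounded, so $\abs{v_1}_F$ and $\abs{w}_F$ are bounded above (and $\abs{w}_F$ bounded below, shrinking $V_2$ around a point if necessary, or just noting $0 \notin V_2$ since $f_d(0)=0$ would contradict nonvanishing when $d\ge 1$). For $\abs{\lambda}_\rk$ large, $\abs{\lambda w + v_1}_F$ is comparable to $\abs{\lambda}_\rk$, and by homogeneity and the ultrametric inequality $\abs{f_d(\lambda w + v_1)}_\rk = \abs{\lambda}_\rk^d \abs{f_d(w + \lambda^{-1} v_1)}_\rk$; since $w + \lambda^{-1} v_1 \to w$ stays in a slightly enlarged compact set avoiding the zeros of $f_d$, this is $\geq c' \abs{\lambda}_\rk^d$ for some $c' > 0$ once $\abs{\lambda}_\rk$ is large enough.

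Next I would argue by contradiction: suppose some $z = z_0 + v_1 \in (Z_f + V_1) \cap \lambda V_2$ with $z_0 \in Z_f$, $v_1 \in V_1$, and $z = \lambda w$ for some $w \in V_2$. Then $z_0 = \lambda w - v_1$, and from $f(z_0) = 0$ we get $f_d(z_0) = -f_0(z_0)$. Now $\abs{z_0}_F$ is comparable to $\abs{\lambda}_\rk$ (being $\lambda w$ shifted by the bounded $v_1$), so the left side satisfies $\abs{f_d(z_0)}_\rk \geq c' \abs{\lambda}_\rk^d$ by the previous paragraph applied with $-v_1$ in place of $v_1$, while the right side satisfies $\abs{f_0(z_0)}_\rk = o(\abs{z_0}_F^d) = o(\abs{\lambda}_\rk^d)$ by \eqref{od}. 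For $\abs{\lambda}_\rk$ sufficiently large this is a contradiction, so no such $z$ exists.

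I do not expect a serious obstacle here; this is the routine ``leading term dominates'' estimate that makes the uncertainty theorem's induction on degree work. The only point requiring a little care is handling the substitution $w \mapsto w + \lambda^{-1} v_1$: one must ensure that for $\abs{\lambda}_\rk$ large this perturbation stays inside a fixed compact neighborhood of $V_2$ on which $f_d$ is still bounded away from zero, which follows from continuity of $f_d$ together with compactness of $V_2$ and boundedness of $V_1$. The ultrametric nature of $\abs{\,\cdot\,}_\rk$ actually makes the norm comparisons cleaner than in the archimedean case, since $\abs{\lambda w + v_1}_F = \abs{\lambda}_\rk \abs{w}_F$ exactly once $\abs{\lambda}_\rk \abs{w}_F > \abs{v_1}_F$.
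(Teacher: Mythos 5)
Your proof is correct and takes essentially the same approach as the paper: both arguments compare the lower bound $\abs{f_d}\geq c\,\abs{\cdot}_F^{d}$ on $\rk^\times V_2$ (homogeneity of $f_d$ plus compactness of $V_2$, which avoids the zeros of $f_d$) with the upper bound $\abs{f_d}=\abs{f_0}=o(\abs{\cdot}_F^{d})$ on $Z_f$ coming from \eqref{od}. The only cosmetic difference is where the bounded translate by $V_1$ is absorbed: you move it onto the $V_2$ side via $f_d(\lambda w-v_1)=\lambda^{d}f_d(w-\lambda^{-1}v_1)$ and a compactness bound on a neighborhood of $V_2$, whereas the paper keeps it on the $Z_f$ side using the ultrametric inequality together with the uniform estimate $\max_{v\in V_1}\abs{f_d(y+v)-f_d(y)}_\rk=o(\abs{y}_F^{d})$.
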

\begin{proof}
Take a positive number $c$ so that
\begin{equation}\label{e1}
  \abs{f_d(y)}_\rk\geq c
  \abs{y}_{F}^d,\quad\textrm{for all }y\in \rk^\times
  V_2.
\end{equation}

It is easy to see that
\begin{equation}\label{e2}
  \max_{v\in V_1}{\abs{f_d(y+v)-f_d(y)}}_\rk=o(\abs{y}_{F}^d), \quad
  \textrm{as }\abs{y}_F\rightarrow +\infty.
\end{equation}
If $y\in Z_f$, then
\begin{eqnarray}\label{e3}
  &&\phantom{=}\abs{f_d(y+v)}_\rk\\ \nonumber
  &&\leq  \max\{\,\abs{f_d(y)}_\rk,\,\abs{f_d(y+v)-f_d(y)}_\rk\,\}\\ \nonumber
  &&=\max\{\,\abs{f_0(y)}_\rk,\,\abs{f_d(y+v)-f_d(y)}_\rk\,\}.
\end{eqnarray}
The inequalities (\ref{od}), (\ref{e2}) and (\ref{e3}) implies that
\begin{equation}\label{e4}
  \abs{f_d(y)}_\rk=o(\abs{y}_{F}^d), \quad
  \textrm{as }y\in Z_{f}+V_1,\, \textrm{ and } \abs{y}_F\rightarrow +\infty.
\end{equation}
The lemma then follows by comparing (\ref{e1}) and (\ref{e4}).

\end{proof}

Recall the following
\begin{dfnt} (cf. \cite[Section 2]{He85})
A distribution  $T\in \oD^{-\infty}(E)$ is said to be smooth at a
point $(x,y)\in E\times F$ if there is a compact open neighborhood
$U$ of $x$, and a compact open neighborhood $V$ of $y$ such that the
Fourier transform $\widehat{1_U T}$ vanishes on $\lambda V$ for all
$\lambda\in \rk^\times$ with $\abs{\lambda}_\rk$ sufficiently large.
Here $1_U$ stands for the characteristic function of $U$. The wave
front set of $T$ at $x\in E$ is defined to be
\[
  \mathrm{WF}_x(T):=\{y\in F\mid T\textrm{ is not smooth at }
  (x,y)\}.
\]

\end{dfnt}
Clearly, the wave front set $\mathrm{WF}_x(T)$ is closed in $F$ and
is stable under multiplications by $\rk^\times$.

\begin{lemt}\label{wavef}
If the Fourier transform $\widehat T$ of a distribution $T\in
\oD^{-\infty}(E)$ is supported in $Z_f$, then for every $x\in E$,
the wave front set $\mathrm{WF}_x(T)$ is contained in the zero locus
of $f_d$.
\end{lemt}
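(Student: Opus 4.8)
The plan is to prove the contrapositive pointwise: for every $y\in F$ with $f_d(y)\neq 0$ I will show that $T$ is smooth at $(x,y)$, which immediately gives $\mathrm{WF}_x(T)\subseteq Z_{f_d}$.

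First, fix any compact open neighborhood $U$ of $x$ in $E$. Since $1_U\in\con^\infty_0(E)$, the distribution $1_U T$ has compact support, and I would record the standard identity
\[
   \widehat{1_U T}=\widehat{1_U}\ast\widehat T
\]
of generalized functions on $F$, where $\widehat{1_U}\in\con^\infty_0(F)$ is the Fourier transform (with respect to a fixed Haar measure on $E$) of the Bruhat--Schwartz function $1_U$; this is just the statement that the Fourier transform carries multiplication to convolution, and it can be checked directly by pairing both sides against a test measure $\omega\in\oD_0^\infty(F)$ and writing $1_U\hat\omega$ as the Fourier transform of the convolution of $\omega$ with a Bruhat--Schwartz measure whose support lies in a fixed compact open set. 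The key consequence is a support bound: choosing a compact open subset $V_1$ of $F$ with $\mathrm{supp}\,\widehat{1_U}\subseteq V_1$, one gets
\[
   \mathrm{supp}\,\widehat{1_U T}\ \subseteq\ \mathrm{supp}\,\widehat T+\mathrm{supp}\,\widehat{1_U}\ \subseteq\ Z_f+V_1,
\]
using the hypothesis $\mathrm{supp}\,\widehat T\subseteq Z_f$.

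Next, given $y$ with $f_d(y)\neq 0$, note that $\{y'\in F\mid f_d(y')\neq 0\}$ is open, so I can pick a compact open neighborhood $V_2$ of $y$ on which $f_d$ has no zero. Lemma \ref{intemp}, applied to $V_1$ and $V_2$, gives $(Z_f+V_1)\cap\lambda V_2=\emptyset$ for all $\lambda\in\rk^\times$ with $\abs{\lambda}_\rk$ large enough; together with the support bound above, $\widehat{1_U T}$ vanishes on $\lambda V_2$ for all such $\lambda$. Hence, with the neighborhoods $U$ of $x$ and $V_2$ of $y$, the distribution $T$ is smooth at $(x,y)$, so $y\notin\mathrm{WF}_x(T)$; as $y$ ranges over the complement of the zero locus of $f_d$, this proves $\mathrm{WF}_x(T)\subseteq Z_{f_d}$.

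The only non-formal point is the displayed identity $\widehat{1_U T}=\widehat{1_U}\ast\widehat T$ and the resulting support bookkeeping---in particular that $\widehat{1_U}$ has compact support, i.e. that the Bruhat--Schwartz space of $E$ is carried into that of $F$ by the Fourier transform. This is classical, but I expect it to be the one place where a little care is needed. Everything after that is an immediate invocation of Lemma \ref{intemp} together with the openness of $\{f_d\neq 0\}$, so I anticipate no further obstacle.
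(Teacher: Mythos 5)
Your proposal is correct and follows essentially the same route as the paper: reduce to showing smoothness of $T$ at $(x,y)$ whenever $f_d(y)\neq 0$, observe that $\widehat{1_U T}$ is obtained from $\widehat T$ by convolution against the compactly supported Fourier transform of $1_U$ (the paper phrases this as a finite linear combination of convolutions $(1_{V_1}\,dy)*\widehat T$), deduce the support bound $Z_f+V_1$, and conclude via Lemma \ref{intemp}. No gaps.
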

\begin{proof}
Let $y\in F$ be a vector so that $f_d(y)\neq 0$. We need to show
that $T$ is smooth at $(x,y)$. Take an arbitrary compact open
neighborhood $U$ of $x$, and an arbitrary compact open neighborhood
$V$ of $y$ so that $f_d$ has no zero in $V$. We claim that
$\widehat{1_U T}$ vanishes on $\lambda V$ for all $\lambda\in
\rk^\times$ with $\abs{\lambda}_\rk$ sufficiently large. The lemma
is a consequence of this claim.

Note that $\widehat{1_U T}$ is a finite linear combination of
generalized functions of the form
\[
  (1_{V_1}\,dy)*\widehat{T}, \quad \textrm{$V_1$ is a
  compact open subset of $F$}.
\]
Here $dy$ is a fixed Haar  measure on $F$. The support of the
convolution $(1_{V_1}\,dy)*\widehat{T}$ is contained in $Z_f+V_1$.
Therefore the claim follows from Lemma \ref{intemp}.
\end{proof}

\begin{lemt}\label{trans}
If a distribution $T\in \oD^{-\infty}(E)$ is supported in a closed
subset $X$ of $E$, and $x\in X$ is a regular point, then the wave
front set $\mathrm{WF}_x(T)$ is invariant under translations by
elements of $\mathrm{N}_x^*(X)\subset F$.
\end{lemt}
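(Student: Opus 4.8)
The plan is to reduce the statement to the case in which $X$ is an affine subspace in a neighbourhood of $x$, where it follows from a short Fourier computation. By definition of a regular point, there is a compact open neighbourhood $U_0$ of $x$ on which $X$ is a closed locally analytic submanifold, and — shrinking $U_0$ if necessary — a locally analytic isomorphism $\Phi$ of $U_0$ onto an open subset of $E$ with $\Phi(x)=x$, $d\Phi_x=\mathrm{id}_E$, and $\Phi(X\cap U_0)=(x+P)\cap\Phi(U_0)$, where $P:=\oT_x(X)$; this is the usual local normal form of a submanifold (implicit function theorem for locally analytic manifolds, followed by an affine automorphism of $E$ to normalise the differential). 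The wave front set $\mathrm{WF}_x(T)$ depends only on $T$ near $x$, and is preserved when $T$ is transported by a locally analytic isomorphism fixing $x$ with identity differential there — the $p$-adic analogue of the transformation law for wave front sets under changes of coordinates (see \cite{He85}). It therefore suffices to prove the assertion for $S:=\Phi_*(1_{U_0}T)$ in place of $T$: this distribution is supported in the affine subspace $x+P$, and $\mathrm{N}_x^*(X)=P^{\perp}:=\{v\in F\mid \langle u,v\rangle=0\ \text{for all}\ u\in P\}$.

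Now fix $v\in P^{\perp}$. Since $\langle z,v\rangle=\langle x,v\rangle$ for every $z\in x+P$, the locally constant function $\psi_v:=\psi(\langle\,\cdot\,,v\rangle)$ equals the constant $\psi(\langle x,v\rangle)$ on $x+P$, hence on $\operatorname{supp}(S)$. Thus $1_U\,\psi_v\,S=\psi(\langle x,v\rangle)\,1_US$ for every compact open neighbourhood $U$ of $x$ contained in the domain of $S$, and taking Fourier transforms, together with the elementary identity $\widehat{\psi_v R}(y)=\widehat R(y+v)$ which is immediate from the definitions of the appendix, gives
\[
   \widehat{1_US}(y+v)=\psi(\langle x,v\rangle)\,\widehat{1_US}(y),\qquad y\in F .
\]
So, for every $v\in P^{\perp}$, the generalized function $\widehat{1_US}$ vanishes on a subset $Y$ of $F$ if and only if it vanishes on $Y+v$. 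To conclude, let $y_0\in F\setminus\mathrm{WF}_x(S)$ and $w\in P^{\perp}=\mathrm{N}_x^*(X)$, and choose compact open neighbourhoods $U$ of $x$ and $V$ of $y_0$ with $\widehat{1_US}$ vanishing on $\lambda V$ for all $\lambda\in\rk^{\times}$ with $\abs{\lambda}_\rk$ large. Since $\lambda w\in P^{\perp}$ for each such $\lambda$, the displayed relation shows that $\widehat{1_US}$ also vanishes on $\lambda V+\lambda w=\lambda(V+w)$ for all such $\lambda$; as $V+w$ is a compact open neighbourhood of $y_0+w$, this means $y_0+w\notin\mathrm{WF}_x(S)$. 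Hence $\mathrm{WF}_x(T)=\mathrm{WF}_x(S)$ is invariant under translation by $\mathrm{N}_x^*(X)$.

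The one genuinely non-routine ingredient is the invariance of the wave front set under the straightening isomorphism $\Phi$ (equivalently, its compatibility with locally analytic coordinate changes); I expect to obtain this by quoting the transformation law for $p$-adic wave front sets from \cite{He85}, the case $d\Phi_x=\mathrm{id}_E$ being all that is needed. The remaining points used above — that $\mathrm{WF}_x$ is local, the behaviour of the Fourier transform under multiplication by the character $\psi_v$, and the fact that one may freely shrink the cut-off $1_U$ — are elementary. It is worth noting that before straightening one only gets the analogue of the displayed identity for $v$ in a bounded lattice of $\mathrm{N}_x^*(X)$, which is insufficient to move the dilated neighbourhood $\lambda V$; passing to the flat model is precisely what makes the identity hold for all $v\in P^{\perp}$, and hence for $\lambda w$ with $\abs{\lambda}_\rk$ arbitrarily large.
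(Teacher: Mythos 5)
Your argument is correct and takes essentially the same route as the paper's proof (which is itself a sketch of \cite[Theorem 4.1.2]{Ai}): straighten $X$ near $x$ by a locally analytic diffeomorphism with identity differential at $x$, invoke invariance of $\mathrm{WF}_x$ under such coordinate changes, and conclude in the flat case by observing that a distribution supported in an affine subspace has Fourier transform quasi-invariant under translations by the conormal space. The only differences are cosmetic: you keep the base point $x$ (hence the harmless factor $\psi(\la x,v\ra)$) rather than first translating to $0$, and you cite \cite{He85} instead of \cite{Ai} for the coordinate-change invariance of the wave front set, which is exactly the same nontrivial ingredient the paper also quotes rather than proves.
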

\begin{proof}
This is proved in \cite[Therorem 4.1.2]{Ai}. We indicate the main
steps.

Step 1. When we replace a distribution by a translation of it, the
wave front set does not change. Therefore we may assume that $x=0$.

Step 2. Let $\varphi: E\rightarrow E$ be a locally analytic
diffeomorphism which sends $0$ to $0$ and induces the identity map
on the tangent space at $0$. When we replace $T$ by its pushing
forward $\varphi_*(T)$, the wave front set $\operatorname{WF}_0(T)$
does not change. Therefore we may assume that $X\cap U=E_0\cap U$,
for some subspace $E_0$ of $E$, and some open neighborhood $U$ of
$0$.

Step 3. When we replace $T$ by a distribution which coincides with
$T$ on an open neighborhood of $0$, the wave front set
$\operatorname{WF}_0(T)$ does not change. Therefore we may assume
that $X=E_0$.

Step 4. Assume that $T$ is supported in $E_0$. Then $\widehat T$ is
invariant under translations by
\[
 \mathrm{N}^*_x(X)=E_0^\perp:=\{v\in F\mid \la u,v\ra=0, \quad u\in E_0\},
\]
which implies that the same holds for $\operatorname{WF}_0(T)$.
\end{proof}

Now we are ready to prove Theorem \ref{uncert}. It is clear that $T$
vanishes on some open neighborhood of $x$ if and only if $0\notin
\operatorname{WF}_x(T)$. If $0\in \operatorname{WF}_x(T)$, then
Lemma \ref{trans} implies that $\mathrm{N}_x^*(X)\subset
\operatorname{WF}_x(T)$. Now Lemma \ref{wavef} further implies that
$\mathrm{N}_x^*(X)$ is contained in the zero locus of $f_d$, which
contradicts the assumption of the theorem.

\end{document}